\DeclareFontFamily{U}{txsyc}{}
\DeclareFontShape{U}{txsyc}{m}{n}{
   <-> txsyc%
}{}
\DeclareFontShape{U}{txsyc}{bx}{n}{
   <-> txbsyc%
}{}
\DeclareFontShape{U}{txsyc}{l}{n}{<->ssub * txsyc/m/n}{}
\DeclareFontShape{U}{txsyc}{b}{n}{<->ssub * txsyc/bx/n}{}
\DeclareSymbolFont{symbolsC}{U}{txsyc}{m}{n}
\DeclareMathSymbol{\df}{\mathrel}{symbolsC}{"42}
\DeclareMathSymbol{\fd}{\mathrel}{symbolsC}{"43}
\DeclareMathSymbol{\lJoin}{\mathrel}{symbolsC}{"58}
\DeclareMathSymbol{\rJoin}{\mathrel}{symbolsC}{"59}
\newcommand{\cB}{{\cal B}}
\newcommand{\cC}{{\cal C}}
\newcommand{\cD}{{\cal D}}
\newcommand{\cF}{{\cal F}}
\newcommand{\cG}{{\cal G}}
\newcommand{\cK}{{\cal K}}
\newcommand{\cL}{{\cal L}}
\newcommand{\cN}{{\cal N}}
\newcommand{\cS}{{\cal S}}
\newcommand{\cT}{{\cal T}}
\newcommand{\cV}{{\cal V}}
\newcommand{\DD}{\mathbb{D}}
\newcommand{\EE}{\mathbb{E}}
\newcommand{\NN}{\mathbb{N}}
\newcommand{\PP}{\mathbb{P}}
\newcommand{\RR}{\mathbb{R}}
\newcommand{\ZZ}{\mathbb{Z}}
\newcommand{\iy}{\infty}
\newcommand{\lt}{\left}
\newcommand{\me}{\medskip}
\newcommand{\ri}{\rightarrow}
\newcommand{\rt}{\right}
\newcommand{\sm}{\smallskip}
\newcommand{\tr}{\triangle}
\newcommand{\wi}{\widetilde}
\newcommand{\fo}{\forall\ }
\newcommand{\Id}{\mathrm{Id}}
\newcommand{\st}{\,:\,}
\newcommand{\un}{\mathds{1}}
\newcommand{\bq}{\begin{eqnarray*}}
\newcommand{\bqn}[1]{\begin{eqnarray}\label{#1}}
\newcommand{\eq}{\end{eqnarray*}}
\newcommand{\eqn}{\end{eqnarray}}
\newcommand{\ttsim}{\raise.17ex\hbox{$\scriptstyle\mathtt{\sim}$}}
\newtheorem{pro}{Proposition} 
\newtheorem{lem}[pro]{Lemma}
\newtheorem{theo}[pro]{Theorem}
\newtheorem{rem}[pro]{Remark}
\newtheorem{exa}[pro]{Example}
\definecolor{red}{rgb}{0.7,0.15,0.15}
\definecolor{green}{rgb}{0,0.5,0}
\definecolor{blue}{rgb}{0,0,0.7}
\makeatletter \@addtoreset{equation}{section}
\def \E{\mathbb{E}}
\def \P{\mathbb{P}}
\def \R{\mathbb{R}}
\def \Z{\mathbb{Z}}
\def\cB{{\cal B}}
\def\cC{{\cal C}}
\def\cD{{\cal D}}
\def\cF{{\cal F}}
\def\cG{{\cal G}}
\def\cK{{\cal K}}
\def\cL{{\cal L}}
\def\cN{{\cal N}}
\def\cS{{\cal S}}
\def\cT{{\cal T}}
\def\cV{{\cal V}}
\def \indic{1\!\!1}
\newcommand{\bRp}{{\bar\RR}_+}
 \newcommand{\bFp}{{\bar\cF}_+}
\author{Laurent Miclo{\footnote{CNRS and Toulouse School of Economics. }}  \, and St\'ephane Villeneuve{\footnote{Toulouse School of Economics. }}}
\title{On a Monotone Dynamic Approach to Optimal Stopping Problems for Continuous-Time Markov Chains }
\date{\today}
\begin{document}

\maketitle

\begin{abstract}
This paper is concerned with the solution of the optimal stopping problem associated to the valuation of Perpetual American options driven by continuous time Markov chains. 
We introduce a new dynamic approach for the numerical pricing of this type of American options where the main idea is to build a monotone sequence of almost excessive functions that are associated to hitting times 
of explicit sets. Under minimal assumptions about the payoff and the Markov chain, we prove that the value function of an American option is characterized by the limit of this monotone sequence. 

\vspace{0.5cm}

{\bf Keywords:} Markov chains, Optimal Stopping, American option pricing
\end{abstract}

\section{Introduction}
Optimal stopping problems have received a lot of attention in the literature on stochastic control since the seminal work of Wald \cite{Wald:45} about sequential analysis 
while the most recent application of optimal stopping problems have emerged from mathematical finance with the valuation of American options and the theory of real options, see e.g. \cite{Myneni:92} and \cite{DixitPindyck:94}. The first general result of optimal stopping theory for stochastic processes was obtained in discrete time by Snell \cite{snell53} who characterized the value function of an optimal stopping problem as the least excessive function that is a majorant of the reward. For a survey of optimal stopping theory for Markov processes, see the book by Shiryaev  \cite{shiryaev78}. Theoretical and numerical aspects of the valuation of American options have been the subject of numerous articles in many different models including discrete-time Markov chains  (see e.g. \cite{CRR:79},\cite{Lamberton:98}), time-homogenous diffusions (see e.g. \cite{DayanikKaratzas:03}) and Lévy processes (see e.g. \cite{Mordecki:02}) . Following the recent study by Eriksson and Pistorius \cite{erikssonpistorius15}, this paper is concerned with optimal stopping problems in the setting of a continuous-time Markov chain. This class of processes, which contains the classic birth-death process, have recently been introduced in finance to model the state of the order book, see \cite{aj:11}. Assuming a uniform integrability condition for the payoff function,  Eriksson and Pistorius \cite{ErikssonPistorius:15} have shown that the value of an optimal stopping problem for a continuous-time Markov chain can be characterized as the unique solution to a system of variational inequalities. Furthermore, when the state space of the underlying Markov chain is a subset of $\R$ and when the stopping region is assumed to be an interval, their paper provides an algorithm to compute the value function.\\  
Our approach is  different and relies on a monotone recursive construction of both the value function and the stopping region along a sequence of almost excessive functions build along the hitting times of explicit sets. Using the Snell characterization  of the value function as the smallest excessive majorant of the payoff function has been already the idea of the two papers \cite{HelmesStockbridge:10} and \cite{christensen:14} in the one-dimensional diffusion case where upper bounds of the value function are build using linear programming.  The main advantage of the monotone approach developed here, is that it converges to the value  with minimal assumptions about the continuous-time Markov chain and the payoff function. In particular, we abandon the uniform integrability condition while the state space is not necessary a subset of the set of real numbers. Such an approach gives a constructive method of finding the value function and seems to be designed for computational methods. It is fair to notice however, that this procedure may give the exact value of the value function only after infinite number of steps. A practical exception is given when considering the case of Markov chains with finite number of states where the resulting algorithm resembles the elimination algorithm proposed in \cite{sonin} and thus converges in a finite number of steps.

\section{Formulation of the problem}
On a countable state space $V$ endowed with the discrete topology, we consider a Markov generator $\cL\df (L(x,y))_{x,y \in V}$, that is an infinite matrix whose entries are real numbers satisfying
\begin{align*}
\forall x \neq y \in V,&\quad L(x,y) \ge 0 \\
\forall x \in V,&\quad L(x,x)=-\sum_{y\neq x} L(x,y)
\end{align*}
We define $L(x)=-L(x,x)$ and assume that $L(x) <+\infty$ for every $x \in V$.

For any probability measure $m$ on $V$, let us  associate to $L$ a  Markov process $X\df(X_t)_{t\geq 0}$  defined on some probability space $(\Omega,\cG,\P)$ whose initial distribution is $m$.
First we set $\sigma_0\df0$ and $X_0$ is sampled according to $m$. Then we consider an exponential random variable $\sigma_1$ of parameter $L(X_0)\df -L(X_0,X_0)$.
If $L(X_0)=0$, we have a.s.\ $\sigma_1=+\iy$ and we take $X_t\df X_0$ for all $t> 0$, as well as $\sigma_n\df+\iy$ for all $n\in\NN, n\geq 2$.
If $L(X_0)>0$, we take $X_t\df X_0$ for all $t\in(0,\sigma_1)$ and we sample $X_{\sigma_1}$ on $V\setminus\{X_0\}$ according to the 
probability distribution $L(X_0,.)/L(X_0)$.
Next, still in the case where $\sigma_1<+\iy$, we sample an inter-time $\sigma_2-\sigma_1$ as an exponential distribution of parameter $L(X_{\sigma_1})$.
If $L(X_{\sigma_1})=0$, we have a.s.\ $\sigma_2=+\iy$ and we take $X_t\df X_{\sigma_1}$ for all $t\in [\sigma_1,+\iy)$, as well as $\sigma_n\df+\iy$ for all $n\in\NN, n\geq 3$.
If $L(X_{\sigma_1})>0$, we take $X_t\df X_{\sigma_1}$ for all $t\in [\sigma_1,\sigma_2)$ and we sample $X_{\sigma_2}$ on $V\setminus\{X_{\sigma_1}\}$ according to the 
probability distribution $(L(X_{\sigma_1},.)/L(X_{\sigma_1}))_{x\in V\setminus\{X_{\sigma_1}\}}$.
We keep on following the same procedure, where
 all the ingredients are independent, except for the explicitly mentioned dependences.
\par
In particular, we get a non-decreasing family $(\sigma_n)_{n\in\ZZ_+}$ of jump times taking values in $\bRp\df\RR_+\sqcup\{+\iy\}$.
Denote the corresponding exploding time
\bq\sigma_{\iy}&\df&\lim_{n\ri\iy} \sigma_n\ \in\ \bRp\eq
When $\sigma_\iy<+\iy$, we must still define $X_t$ for $t\geq \sigma_\iy$.
So introduce $\tr$ a cemetery point not belonging to $V$ and denote $\bar V\df V\sqcup\{\tr\}$. $\bar V$ is seen as the Alexandrov compactification of $V$.
We take $X_t\df \tr$ for all $t\geq \sigma_\iy$ to get  a $\bar V$-valued Markov process $X$.
Let $(\cG_t)_{t \ge 0}$ be the completed right-continuous filtration generated by  $X\df(X_t)_{t\geq 0}$ and
let $\cF$ (resp.$\bFp$) be the set of functions defined on $V$  taking values in $\RR_+$ (resp. $\bRp\df\RR_+\sqcup\{+\iy\}$).
The generator $L$  acts on $\cF$ via
\bq
\fo f\in\cF,\,\fo x\in V,\qquad \cL[f](x)&\df& \sum_{y\in V} L(x,y) f(y)\\
&=&
\sum_{y\in V\setminus\{x\}} L(x,y) (f(y)-f(x)). \eq
We would like to extend this action on $\bFp$, but since its elements are allowed to take the value $+\iy$, it leads to artificial conventions
such as $(+\iy)-(+\iy)=0$. The only reasonable convention  is $0\times (+\iy)=0$, so let us introduce $\cK$,
the infinite matrix whose diagonal entries are zero and which is coinciding with $\cL$ outside the diagonal.
Its interest is that $\cK$ acts obviously on $\bFp$ through
\bqn{K}
\fo f\in\bFp,\,\fo x\in V,\qquad \cK[f](x)&\df& 
\sum_{y\in V\setminus\{x\}} L(x,y) f(y)\ \in\ \bRp. \eqn

In this paper, we will consider an optimal stopping problem with payoff $e^{-rt}\phi(X_t)$, where $\phi \in \bFp$ and $r>0$, given by
\begin{equation}\label{defOSP}
u(x) \df\sup_{\tau \in \cT} \E_x [e^{-r\tau}\phi(X_\tau)],
\end{equation} 
where $\cT$ is a set of $\cG_t$-adapted stopping times and where the $x$ in index of the expectation indicates that $X$ starts from $x\in V$.  A stopping time $\tau^*$ is said to be optimal for $u$ if
\bq 
u(x)&=&\E_x(e^{-r{\tau^*}}\phi(X_{\tau^*})).
\eq 
Observe that with our convention, we have
$
e^{-r\tau}\phi(X_\tau)=0
$ on the set $\{\tau =+\infty\}$.\par
There are two questions to be solved in connection with Definition  \eqref{defOSP}. The first question is to value the  function $u$ while the second  is to find 
an optimal stopping time $\tau^*$. Note that optimal stopping times may not exist (see \cite{shiryaev78} Example 5 p.61) .
According to the general optimal stopping theory, an optimal stopping time, if it exists, is related to the set
\begin{equation}\label{stoppingset}
D=\{x \in V:\, u(x)=\phi(x) \}
\end{equation} 
called the stopping region. In particular, when $\phi$ satisfies the uniform integrability condition 
\bq 
\EE_x\left[ \sup_{t \ge 0} e^{-rt}\phi(X_t) \right]& <&\infty,
\eq 
the stopping time $\tau_D=\inf\{ t \ge 0,\, X_t \in D\}$ is optimal if  for all $x \in V, \P_x( \tau_D<\infty)=1$ (see Shiryaev \cite{shiryaev78}  Theorem 4 p.52).\\
The main objective of this paper is to provide a recursive construction of both the value function $u$ and the stopping region $D$ without assuming the uniform integrability condition. However,
to present the idea of the monotone dynamic approach developed in this paper, Section 3 first consider the case of a finite state space $V$ for which the uniform integrability condition is obviously satisfied. 
Section 4 is devoted to the general case. Section 5 revisits an example of optimal stopping with random intervention times. 
\section{Finite state space}

On a finite set $V$, the payoff function is bounded and thus the value function $u$ defined by \eqref{defOSP} is well-defined for every $x \in V$. Moreover,  it is well-known (see \cite{shiryaev78},  Theorem 3) that the value function $u$  is the minimal $r$-excessive function which dominates $\phi$. Recall that a function $f$ is $r$-excessive if $0 \ge \cL[f]-rf$. Moreover, on the set $\{ u > \phi \}$, $u$ satisfies
$
\cL[u](x)-ru(x)=0
$.
Because of the finiteness of $V$, the process
\bq 
e^{-rt}f(X_t)-f(x)-\int_0^t e^{-rs}\left( \cL[f]-rf\right)(X_s)\,ds
\eq 
is a $\cG_t$-martingale under $\P_x$ for every function $f$ defined on $V$ and every $x \in V$ which yields by taking expectations, the so-called Dynkin’s formula.\\
We first establish some properties of the stopping region $\cD$. Let us introduce the set \bq \cD_1&\df&\{x \in V,\,\cL[\phi](x)-r\phi(x) \le 0\}\eq  and assume that $\phi(x_0)>0$ for some $x_0 \in V$. We
recall that a Markov process $X$ is said to be irreducible if for all $x,y \in V\times V,\, \P_x(T_y<+\infty)>0$ where
\bq 
T_y&\df&\inf\{ t \ge 0,\, X_t=y\}.
\eq 

\begin{lem}
We have the inclusion $\cD \subset \cD_1$ and when we assume furthermore that $X$ is irreducible, we have $\cD \subset \{x \in V, \phi(x)>0\}$.
\end{lem}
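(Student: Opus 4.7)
The plan is to handle the two inclusions separately, relying on the characterization of $u$ as the minimal $r$-excessive majorant of $\phi$.

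For the inclusion $\cD\subset\cD_1$, I would fix $x\in\cD$, so that $u(x)=\phi(x)$. Since $u$ is $r$-excessive, we have $\cL[u](x)-ru(x)\le 0$. I would then compare $\cL[\phi]-r\phi$ with $\cL[u]-ru$ at the point $x$ by writing
\bq
\cL[u](x)-ru(x)&=&\sum_{y\neq x}L(x,y)\bigl(u(y)-u(x)\bigr)-ru(x),\\
\cL[\phi](x)-r\phi(x)&=&\sum_{y\neq x}L(x,y)\bigl(\phi(y)-\phi(x)\bigr)-r\phi(x).
\eq
Using $u(x)=\phi(x)$ and $u(y)\ge \phi(y)$ for every $y\in V$, the difference $\cL[u](x)-ru(x)-(\cL[\phi](x)-r\phi(x))=\sum_{y\neq x}L(x,y)(u(y)-\phi(y))$ is nonnegative, so $\cL[\phi](x)-r\phi(x)\le \cL[u](x)-ru(x)\le 0$, hence $x\in\cD_1$. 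This step is essentially computational and presents no real obstacle.

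For the second inclusion, assume $X$ is irreducible and $\phi(x_0)>0$. Given $x\in\cD$, I would argue by contradiction: suppose $\phi(x)=0$, so that $u(x)=0$. Using the suboptimal stopping time $\tau\df T_{x_0}$ (with the convention that $e^{-r\tau}\phi(X_\tau)=0$ on $\{\tau=+\iy\}$), we obtain
\bq
u(x)&\ge&\EE_x\bigl[e^{-rT_{x_0}}\phi(X_{T_{x_0}})\bigr]\ =\ \phi(x_0)\,\EE_x\bigl[e^{-rT_{x_0}}\un_{\{T_{x_0}<+\iy\}}\bigr].
\eq
By irreducibility, $\P_x(T_{x_0}<+\iy)>0$, and on this event $e^{-rT_{x_0}}>0$, so the right-hand side is strictly positive. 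This contradicts $u(x)=0$ and forces $\phi(x)>0$.

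The main (mild) subtlety is the second inclusion, where one must invoke irreducibility precisely to guarantee that the hitting time $T_{x_0}$ has positive probability of being finite even in a finite-state but possibly reducible-looking context; once this is noted the argument is immediate. The first inclusion is the local comparison of $\cL[\phi]-r\phi$ and $\cL[u]-ru$ made possible by the matching $u(x)=\phi(x)$ on $\cD$, which is the only place the defining equality of $\cD$ enters.
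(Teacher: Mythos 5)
Your proof is correct and follows essentially the same route as the paper: the first inclusion is the same comparison of $\cL[u]-ru$ with $\cL[\phi]-r\phi$ at a point where $u(x)=\phi(x)$ using $u\ge\phi$, and the second uses the suboptimal strategy of stopping at $T_{x_0}$ together with irreducibility. The only cosmetic difference is that the paper evaluates the truncated time $T_{x_0}\wedge t$ and lets $t\to\infty$ (using boundedness of $\phi$), whereas you invoke the stated convention $e^{-r\tau}\phi(X_\tau)=0$ on $\{\tau=+\iy\}$ to use $T_{x_0}$ directly, which is equally valid here.
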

\begin{proof} 
Because $u$ is $r$-excessive, we have for all $x \in \cD$,
\begin{align*}
0& \ge \cL[u](x)-ru(x) \\
&=\sum_{{y \neq x}} L(x,y)u(y)-(r+L(x)) \phi(x) \quad\hbox{ because }x \in \cD\\
&\ge \sum_{y \neq x} L(x,y)\phi(y)-(r+L(x))\phi(x) \quad\hbox{ because } u \ge \phi\\
&=\cL[\phi](x)-r\phi (x). 
\end{align*}

Therefore, $x \in \cD_1$.\\

For the second inclusion, let $T_{x_0}$ be the first time $X$ hits $x_0$.  We have for all $x \in V$, and every $t \ge 0$,
 \bq
 u(x)&\ge& \EE_x[e^{-r (T_{x_0}\wedge t)}\phi(X_{T_{x_0}\wedge t})]\\
&=& \phi(x_0)\E_x[e^{-rT_{x_0}} \indic_{T_{x_0} \le t}]+\EE_x[e^{-r t}\phi(X_{ t})\indic_{T_{x_0} \ge t}]
\eq 
Letting $t$ tend to $+\infty$, we obtain because $\phi$ is bounded on the finite sate space $V$
\bq 
u(x)& \ge& \phi(x_0)\E_x[e^{-rT_{x_0}} \indic_{T_{x_0} <+\infty}]\ >\ 0
\eq 
where the last strict inequality follows from the fact that $X$ is irreducible. 

\end{proof} 

Now, we introduce $u_1$  as the value associated to the stopping strategy {\it Stop the first time $X$ enters in $\cD_1$}. Formally, let us define
\bq 
\tau_1&\df&\inf\{ t \ge 0:\, X_t \in \cD_1\} 
\eq  
and
\bq 
u_1(x)&\df&\E_x[e^{-r\tau_1}\phi(X_{\tau_1})\indic_{\tau_{1} <+\infty}]
\eq 
Clearly $u\ge u_1$ by Definition \eqref{defOSP}. Moreover, we have $u_1=\phi$ on $\cD_1$.

\begin{lem} \label{step0}
We have  
\begin{itemize}
\item $\forall x \notin \cD_1$, $u_1(x)>\phi(x)$ and $\cL[u_1](x)-ru_1(x)= 0$. 
\item $\forall x \in \cD$, $\cL[u_1](x)-ru_1(x) \le 0$.
\end{itemize}
\end{lem}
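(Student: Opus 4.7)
The plan is to extract from Dynkin's formula a single identity from which both items will follow transparently. First, I apply Dynkin's formula to the bounded payoff $\phi$ at the stopping time $t\wedge\tau_1$, obtaining
\[
\EE_x\!\left[e^{-r(t\wedge\tau_1)}\phi(X_{t\wedge\tau_1})\right] \;=\; \phi(x) + \EE_x\!\int_0^{t\wedge\tau_1} e^{-rs}(\cL[\phi]-r\phi)(X_s)\,ds,
\]
and I let $t\to\infty$. Boundedness of $\phi$ on the finite state space $V$ together with the factor $e^{-rt}$ (which handles the event $\{\tau_1=\infty\}$) allow a dominated-convergence argument on the left, while on the right the integrand is non-negative on $[0,\tau_1)$ since $X_s\notin\cD_1$ there, so monotone convergence applies. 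This yields the working identity
\[
u_1(x) \;=\; \phi(x) + \EE_x\!\int_0^{\tau_1} e^{-rs}(\cL[\phi]-r\phi)(X_s)\,ds. \qquad(\star)
\]

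For the first item, I take $x\notin\cD_1$. A preliminary observation is that $L(x)>0$: otherwise $x$ would be absorbing with $\cL[\phi](x)=0$ and $r\phi(x)\ge 0$, forcing $x\in\cD_1$, a contradiction. Hence $\sigma_1>0$ almost surely, and on $[0,\sigma_1)$ the process sits at $x$ with strictly positive integrand $(\cL[\phi]-r\phi)(x)>0$, so identity $(\star)$ gives $u_1(x)>\phi(x)$. For the harmonic relation, I invoke the strong Markov property at $\sigma_1\le\tau_1$ to write $u_1(x)=\EE_x[e^{-r\sigma_1}u_1(X_{\sigma_1})]$; the independence of $\sigma_1\sim\mathrm{Exp}(L(x))$ and of $X_{\sigma_1}$ distributed as $L(x,\pot)/L(x)$ then yields $(L(x)+r)u_1(x)=\cK[u_1](x)$, which rearranges precisely to $\cL[u_1](x)-ru_1(x)=0$.

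For the second item, let $x\in\cD$. The previous lemma gives $\cD\subset\cD_1$, hence $u_1(x)=\phi(x)=u(x)$. Moreover $u_1\le u$ pointwise on $V$, since $\tau_1$ is one of the competing stopping times in the supremum defining $u$. Expanding the generator,
\[
\cL[u_1](x)-ru_1(x) \;=\; \sum_{y\neq x}L(x,y)u_1(y) - (L(x)+r)\phi(x) \;\le\; \sum_{y\neq x}L(x,y)u(y) - (L(x)+r)u(x) \;=\; \cL[u](x)-ru(x) \;\le\; 0,
\]
the final inequality using the $r$-excessivity of $u$.

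The only genuinely delicate point is the passage to the limit in Dynkin's formula when $\tau_1$ need not be almost surely finite; the boundedness of $\phi$ and of $\cL[\phi]-r\phi$ on the finite state space $V$ render this step routine, and it is essentially the only place where finiteness of $V$ is truly used.
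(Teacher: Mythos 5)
Your proof is correct and follows essentially the same route as the paper's: the identity $(\star)$ is exactly what the paper obtains by optional sampling of the bounded martingale $M_t$, the harmonic relation comes from the same strong Markov argument at $\sigma_1$, and the second item is the paper's computation (written in the reverse direction) using $\cD\subset\cD_1$, $u_1\le u$ and the $r$-excessivity of $u$. Your added care with the limit $t\to\infty$ and the observation that $L(x)>0$ off $\cD_1$ only make explicit what the paper leaves implicit.
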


\begin{proof}
Let $x \notin \cD_1$. Applying the Optional Sampling theorem to the bounded martingale
\bq 
M_t&=&e^{-rt}\phi(X_t)-\phi(x)-\int_0^t e^{-rs}\left( \cL[\phi]-r \phi\right)(X_s)\,ds,
\eq 
we have,
\begin{align*}
u_1(x)&=\E_x[e^{-r\tau_1}\phi(X_{\tau_1})]\\
&=\phi(x)+\E_x\left[\int_0^{\tau_1} e^{-rs}(L[\phi](X_s)-r\phi(X_s))\,ds\right]\\
&>\phi(x),
\end{align*}
because $L[\phi](y)-r\phi(y) > 0$ for $y \notin \cD_1$. Moreover, for $x \notin \cD_1$, $\tau_1 \ge \sigma_1$ almost surely. Thus, the Strong Markov property yields
\begin{align*}
u_1(x)&=\E_x[e^{-r\tau_1}\phi(X_{\tau_1})]\\
&=\E_x[e^{-r\sigma_1}u_1(X_{\sigma_1})]\\
&=\frac{\cL[u_1](x)+L(x)u_1(x)}{r+L(x)},
\end{align*}
from which we deduce $\cL[u_1](x)-ru_1(x)= 0$. \\

Because $u$ is $r$-excessive, we have for all $x \in \cD$,
\begin{align*}
0& \ge \cL[u](x)-ru(x) \\
&=\sum_{y \in V} L(x,y)u(y)-r\phi(x) \quad\hbox{ because }x \in \cD\\
&\ge \sum_{y \neq x} L(x,y)u_1(y)-(r+L(x))\phi(x)\\
&=\cL[u_1](x)-ru_1(x) \quad\hbox{ because } \cD \subset \cD_1.
\end{align*}
\end{proof}

To start the recursive construction, we introduce the set 
\bq 
\cD_2&\df&\{x \in \cD_1,\, \cL[u_1](x)-ru_1(x)\le 0 \}
\eq 
and the function
\bq 
u_2(x)&\df&\E_x[e^{-r\tau_2}\phi(X_{\tau_2})\indic_{\tau_2<+\infty}]
\eq 
where
\bq 
\tau_2&\df&\inf\{ t \ge 0:\, X_t \in \cD_2\} 
\eq 
Observe that if $\cD_2=\cD_1$, $u_1$ is a $r$-excessive majorant of $\phi$ and therefore $u_1\ge u$. Because the reverse inequality holds by definition, the procedure stops.

By induction, we shall define a sequence $(u_n,\cD_n)$ for $n \in \Z_+$  starting from $(u_1,\cD_1)$ by
\bq 
\cD_{n+1}&\df&\{x \in \cD_n,\, \cL[u_n](x)-ru_n(x)\le 0 \}
\eq 
and
\bq 
u_{n+1}(x)&\df&\E_x[e^{-r\tau_{n+1}}\phi(X_{\tau_{n+1}})\indic_{\tau_{n+1}<+\infty}]
\eq 
where
\bq 
\tau_{n+1}&\df&\inf\{ t \ge 0: \, X_t \in \cD_{n+1}\} .
\eq

Next lemma proves a key monotonicity result.
\begin{lem}\label{mono_finite}
We have $u_{n+1} \ge u_n$ and $\forall x \notin \cD_{n+1}$, $u_{n+1}(x)> \phi(x)$.
\end{lem}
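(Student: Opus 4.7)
The proof will proceed by induction on $n$, with the base case $n=0$ being captured by setting $u_0\df\phi$ and $\cD_0\df V$, so that the recursion definitions match and Lemma \ref{step0} is precisely the statement for $n=1$. The main tool throughout is Dynkin's formula, which is available here because $V$ is finite and hence every $u_n$ is bounded.

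The plan is to carry the following strengthened inductive hypothesis at step $n$: (a) $u_n = \phi$ on $\cD_n$ (immediate from $\tau_n = 0$ there), (b) $\cL[u_n] - r u_n = 0$ on $V\setminus \cD_n$, (c) $u_n > \phi$ on $V\setminus \cD_n$, and (d) $u_n \geq u_{n-1}$. Lemma \ref{step0} supplies (a)--(c) for $n=1$ and (d) is trivial with $u_0=\phi$. The definition $\cD_{n+1} = \{x \in \cD_n : \cL[u_n](x) - r u_n(x) \leq 0\}$ immediately gives the dichotomy that on $V\setminus\cD_{n+1}$ one has $\cL[u_n] - r u_n \geq 0$: strictly positive on $\cD_n\setminus\cD_{n+1}$ by definition, and equal to $0$ on $V\setminus\cD_n$ by (b).

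Now I would apply Dynkin's formula to the bounded function $u_n$ and the stopping time $\tau_{n+1}\wedge t$, letting $t\to+\infty$ (using bounded convergence and $r>0$) to obtain
\begin{equation*}
\E_x\!\left[e^{-r\tau_{n+1}}u_n(X_{\tau_{n+1}})\indic_{\tau_{n+1}<+\infty}\right] = u_n(x) + \E_x\!\left[\int_0^{\tau_{n+1}} e^{-rs}(\cL[u_n]-ru_n)(X_s)\,ds\right].
\end{equation*}
Since $\cD_{n+1}\subset \cD_n$, property (a) at step $n$ gives $u_n(X_{\tau_{n+1}}) = \phi(X_{\tau_{n+1}})$ on $\{\tau_{n+1}<+\infty\}$, so the left-hand side equals $u_{n+1}(x)$. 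On the stochastic interval $[0,\tau_{n+1})$ the process lies in $V\setminus\cD_{n+1}$, where the integrand is nonnegative by the dichotomy above. This yields $u_{n+1}(x)\geq u_n(x)$, proving the first claim.

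For the strict inequality off $\cD_{n+1}$, I would split into two cases. If $x \notin \cD_n$, then by (c) and the monotonicity just established, $u_{n+1}(x)\geq u_n(x)>\phi(x)$. If $x \in \cD_n\setminus\cD_{n+1}$, then $u_n(x)=\phi(x)$ by (a), while $(\cL[u_n]-ru_n)(x)>0$ by the definition of $\cD_{n+1}$; since $X_s=x$ on $[0,\sigma_1)$ and $\sigma_1>0$ $\P_x$-a.s. (as $L(x)<+\infty$) while also $\tau_{n+1}>0$ $\P_x$-a.s. (since $x\notin \cD_{n+1}$), the integrand in Dynkin's identity is $e^{-rs}(\cL[u_n]-ru_n)(x)>0$ on the nontrivial interval $[0,\sigma_1\wedge\tau_{n+1})$, so the expectation is strictly positive and $u_{n+1}(x)>\phi(x)$. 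The main obstacle is really just organizing the dichotomy on $V\setminus \cD_{n+1}$ carefully; to keep the induction going one should also record that the strong Markov argument of Lemma \ref{step0} applied verbatim to $u_{n+1}$ and $\tau_{n+1}$ yields $\cL[u_{n+1}]-r u_{n+1}=0$ on $V\setminus\cD_{n+1}$, preserving property (b) at the next step.
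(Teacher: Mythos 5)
Your proof is correct and follows essentially the same route as the paper: the same strengthened inductive hypothesis, the same dichotomy for the sign of $\cL[u_n]-ru_n$ on $V\setminus\cD_{n+1}$, and the same application of Dynkin's formula at $\tau_{n+1}$ to get $u_{n+1}\ge u_n$. The only (harmless) divergence is in the strict inequality for $x\in\cD_n\cap\cD_{n+1}^{c}$: the paper introduces the exit time $\hat\tau$ of $\cD_n\cap\cD_{n+1}^{c}$ and invokes the strong Markov property there, whereas you extract strictness directly from the positivity of the Dynkin integrand on the holding interval $[0,\sigma_1\wedge\tau_{n+1})$ — both arguments work, and you rightly note that property (b) must be re-established for $u_{n+1}$ to keep the induction running.
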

\begin{proof}

To start the induction, we assume using Lemma \ref{step0} that $u_n$ satisfies
\begin{align}
\forall x&\in V\setminus D_n, \cL[ u_n](x)-ru_n(x)=0 \hbox{ and } u_{n}(x)>\phi(x)\\
\forall x &\in D_n, \quad  u_n(x)=\phi(x).
\end{align}

For $x \in \cD_{n+1} \subset \cD_{n}$, we have $u_{n+1}(x)=\phi(x)=u_{n}(x)$. On the other hand, for $x \notin \cD_{n+1}$, we have
\begin{align*}
u_{n+1}(x)&=\E_x[e^{-r\tau_{n+1}}\phi(X_{\tau_{n+1}})\indic_{\tau_{n+1}<+\infty}]\\
&=\E_x[e^{-r\tau_{n+1}}u_{n}(X_{\tau_{n}})\indic_{\tau_{n+1}<+\infty}]\quad \hbox{ because $\cD_{n+1} \subset \cD_{n}$}\\
&=u_{n}(x)+\E_x\left[\int_0^{\tau_{n+1}} e^{-rs}(\cL[u_{n}](X_s)-ru_{n}(X_s))\,ds\right]\\
&\ge u_{n}(x),
\end{align*}
because $\cL[u_{n}]-u_{n} \ge 0$ outside $\cD_{n+1}$.\\
Let $x \notin \cD_{n+1}$. If  $x \notin \cD_{n}$, we have $u_{n}(x)>\phi(x)$  and thus $u_{n+1}(x)>\phi(x)$. Now, let $x \in \cD_{n} \cap \cD_{n+1}^c$ and let us define
\bq 
\hat \tau&\df&\inf\{ t \ge 0,\, X_t \notin \cD_{n} \cap \cD_{n+1}^c\}.
\eq 
Clearly, $\hat \tau \le \tau_{n+1}$. Therefore by the Strong Markov property,
\begin{align*}
u_{n+1}(x)&=\E_x[e^{-r \hat \tau}u_{n+1}(X_{\hat\tau})\indic_{\hat\tau<+\infty}]\\
&\ge\E_x[e^{-r\hat \tau}u_{n}(X_{\hat\tau})\indic_{\hat\tau<+\infty}]\\
&> u_{n}(x),
\end{align*}
because $\cL[u_{n}]-u_{n} > 0$ on the set $\cD_{n} \cap \cD_{n+1}^c$.
\end{proof}

According to Lemma \ref{mono_finite}, the sequence $(u_n)_n$ is increasing and satisfies $u_n \ge \phi$ with strict inequality outside $\cD_n$, while by construction, the sequence $(\cD_n)_n$ is decreasing.
It follows that we can define a function $u_\iy$ on $V$ by
\bq
 u_\iy(x)&=&\lim_{n\ri\iy}u_n(x)
\eq
and a set by
\bq
\cD_\iy&\df&\bigcap_{n\in \ZZ_+} \cD_n. 
\eq
We are in a position to state our first result. 
\begin{theo}\label{main_finite}
We have $u_\iy=u$ and $\cD_\iy=D$.
\end{theo}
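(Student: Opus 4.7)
The plan is to show the two equalities by sandwiching. First, I would establish $u_\infty \le u$ directly from the definitions: each $u_n$ is the expected discounted payoff under the specific stopping time $\tau_n \in \cT$, so $u_n \le u$ for every $n$, and passing to the limit gives $u_\infty \le u$. The task is therefore to prove $u_\infty \ge u$, for which I would invoke the Snell-type characterization of $u$ as the smallest $r$-excessive majorant of $\phi$ (recalled at the start of Section 3). It suffices to show that $u_\infty$ is an $r$-excessive function dominating $\phi$.

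The fact that $u_\infty \ge \phi$ is immediate: Lemma \ref{mono_finite} shows $u_n \ge \phi$ for every $n$. For $r$-excessiveness, I would split $V$ into $\cD_\infty$ and its complement. For $x \notin \cD_\infty$, since $\cD_n$ is decreasing, there exists $N$ with $x \notin \cD_n$ for all $n \ge N$; by Lemma \ref{step0} and the induction in the proof of Lemma \ref{mono_finite}, $\cL[u_n](x)-r u_n(x) = 0$ for all $n \ge N$. Finiteness of $V$ (hence finiteness of the sum defining $\cL$) allows passage to the pointwise limit, giving $\cL[u_\infty](x)-r u_\infty(x)=0$. For $x \in \cD_\infty \subset \cD_{n+1}$, the definition of $\cD_{n+1}$ yields $\cL[u_n](x)-r u_n(x) \le 0$ for every $n$, and again the finite-sum limit produces $\cL[u_\infty](x)-r u_\infty(x) \le 0$. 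Hence $u_\infty$ is $r$-excessive, so $u_\infty \ge u$ by minimality, and the first equality follows.

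For $\cD_\infty = D$, I would argue both inclusions using $u_\infty = u$. If $x \in \cD_\infty$, then $x \in \cD_n$ for every $n$, so $u_n(x) = \phi(x)$ for every $n$, whence $u(x) = u_\infty(x) = \phi(x)$, giving $x \in D$. Conversely, if $x \notin \cD_\infty$, pick $N$ with $x \notin \cD_N$; Lemma \ref{mono_finite} (or Lemma \ref{step0} for $N=1$) gives $u_N(x) > \phi(x)$, so $u(x) = u_\infty(x) \ge u_N(x) > \phi(x)$, meaning $x \notin D$.

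The only genuinely delicate point is the interchange of limit and the operator $\cL$, which is where the finiteness of $V$ is essential: in the general (countably infinite) setting of Section 4 this step will require further work, but here it is automatic because each row of $\cL$ has finite support. Beyond that, the argument is a clean application of Snell's minimal excessive majorant characterization together with the monotonicity already established in Lemma \ref{mono_finite}.
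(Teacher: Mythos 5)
Your proof is correct and follows essentially the same route as the paper: both arguments get $u_\infty\le u$ by passing to the limit in $u_n\le u$, and both get the reverse inequality by showing $u_\infty\ge\phi$ together with $\cL[u_\infty]-ru_\infty\le 0$ on $\cD_\infty$ and $=0$ off $\cD_\infty$ via the (finiteness-justified) pointwise limits, then handle $\cD_\infty=D$ by the same two inclusions using Lemma \ref{mono_finite}. The only cosmetic difference is that you conclude $u_\infty\ge u$ by citing the Snell minimal-excessive-majorant characterization recalled at the start of Section 3, whereas the paper re-derives that implication inline with a one-line optional-sampling bound $u_\infty(x)\ge\EE_x[e^{-r\tau}u_\infty(X_\tau)]\ge\EE_x[e^{-r\tau}\phi(X_\tau)]$.
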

\begin{proof} By definition, $u \ge u_n$ for every $n \in \ZZ_+$ and thus passing to the limit, we have $u \ge u_\iy$. To show the reverse inequality,
we first notice that for every $n \in \ZZ_+$, we have $u_n \ge \phi$ and thus $u_\iy \ge \phi$.\\
If $x \in \cD_\iy$ then $x \in \cD_{n+1}$ for every $n \in \ZZ_+$ and thus $\cL[u_n](x)-ru_n(x)\le 0$  for every $n \in \ZZ_+$. Passing to the limit, we obtain
\bq 
\cL[u_\iy](x)-ru_\iy(x) &\le& 0 \qquad \forall x \in \cD_\iy.
\eq 
If $x \notin \cD_\iy$ then there is some $n_0$ such that $x \notin \cD_n$ for $n \ge n_0$. Thus, for such a $n \ge n_0$, we have
\bq 
\cL[u_n](x)-ru_n(x)&=&0.
\eq 
Passing to the limit, we obtain 
\bq 
\cL[u_\iy](x)-ru_\iy(x) &=& 0 \qquad \forall x \notin \cD_\iy.
\eq 
To conclude, we observe that because for every $x \in V$, we have $ \cL[u_\iy](x)-ru_\iy(x) \le 0 $, we have for every stopping time $\tau$
\bq 
u_\iy(x) &\ge& \EE[ e^{-r\tau}u_\iy(X_\tau)],
\eq 
from which we deduce that
\begin{equation} \label{minore}
u_\iy(x) \ge \EE[ e^{-r\tau}\phi(X_\tau)],
\end{equation}
because $u_\iy \ge \phi$. Taking the supremum over $\tau$ at the right-hand side of \eqref{minore}, we obtain $u_\iy \ge u$.\\
Equality $u=u_\iy$ implies that $\cD_\infty \subset \cD$. To show the reverse inclusion, let $x \notin \cD_\iy$ which means that $x \notin \cD_n$ for $n$ larger than some $n_0$.
Lemma \ref{mono_finite} yields that $u_n(x)>\phi(x)$ for $n \ge n_0$ and because $u_n$ is increasing, we deduce that $u_\iy(x)>\phi(x)$ for $x \notin \cD_\iy$ which concludes the proof.\end{proof}

\begin{rem}
Because $V$ is finite, the sequence $(u_n)_n$ is constant after some $n_0 \le card(V)$ and therefore the procedure stops after at most $card(V)$ steps. 
\end{rem}
 
\begin{exa} \label{ex1}
Let $(X_t)_{t\geq 0}$ be a birth-death process on the set of integers $V_N=\{-N,\ldots,N\}$ stopped the first time it hits $-N$ or $N$. We define for $x \in V_N \setminus \{-N,N\}$,
\bq 
\left\{
\begin{array}{cc}
L(x,x+1)&=\lambda \ge 0,\\
L(x,x-1)&=\mu \ge 0, \\
L(x)&=\lambda + \mu,
\end{array}
\right.
\eq 
and $L(-N)=L(N)=0$. We define $\phi(x)=\max(x,0)$ as the reward function.\\
 Clearly, $u(-N)=0=\phi(-N)$ and $u(N)=N=\phi(N)$ thus the stopping region contains the extreme points $\{-N, N\}$.
We define
\bq 
\cD_1&\df&\{ x \in V_N,\,\cL[\phi](x)-r\phi(x) \le 0\}.
\eq 
A direct computation shows that $\cL[\phi](x)-r\phi(x)=0$ for $-N+1\le x \le -1$, $\cL[\phi](0)-r\phi(0)=\lambda$ and $\cL[\phi](x)-r\phi(x)=\lambda - \mu -r x$ for $ 1 \le x \le N-1$. Therefore,
\bq 
\cD_1&=&\{-N,-N+1,\ldots,-1\}\cup \{x_1,\ldots,N\},
\eq 
with $x_1= \lceil{ \frac{\lambda-\mu}{r}  \rceil} $, where $\lceil{ x  \rceil}$ is the least integer greater than or equal to $x$. In particular, when $\lambda \le \mu$, we have $x_1=0$ and thus $\cD_1=V_N=\cD$. Assume now that $\lambda > \mu $.
To start the induction, we define 
\bq 
\tau_1&\df&\inf\{ t \ge 0:\, X_t \in \cD_1\} 
\eq  
and
\bq 
u_1(x)&\df&\E_x[e^{-r\tau_1}\phi(X_{\tau_1})\indic_{\tau_{1} <+\infty}]
\eq 
and we construct $u_1$ by solving for $0 \le x \le x_1-1$, the linear equation
\bq 
\lambda u_1(x+1)+\mu u_1(x-1)-(r+\lambda+\mu)u_1(x)&=&0 \hbox{ with } u_1(-1)\ =\ 0 \hbox{ and }u_1(x_1)\ =\ x_1.
\eq 
The function $u_1$ is thus explicit and denoting 
\bq 
\Delta &\df& (r+\lambda+\mu)^2-4\lambda\mu>0, 
\eq 
\bq 
\theta_1\ =\ \frac{r+\lambda+\mu-\sqrt{\Delta}}{2\lambda} \hbox{ and } \theta_2\ =\ \frac{r+\lambda+\mu+\sqrt{\Delta}}{2\lambda},
\eq 
we have
\bq 
u_1(x)&=&x_1\frac{\theta_2^{x+1} - \theta_1^{x+1} }{\theta_2^{x_1+1} - \theta_1^{x_1+1}}.
\eq

Observe that $\theta_2+\theta_1>0$ and thus $\cL[u_1](-1)-ru_1(-1)=\lambda u_1(0)>0$. As a consequence, $-1$ does not belong to the set 
\bq 
\cD_2&=&\{ x \in \cD_1,\, \cL[u_1](x)-ru_1(x) \le 0\}.
\eq 
Therefore, if $\cL[u_1](x_1)-ru_1(x_1)=\mu u_1(x_1-1)+\lambda-(r+\mu)x_1\le 0$, we have
\bq 
\cD_2&=&\{-N,-N+1,\ldots,-2\}\cup \{x_1,\ldots,N\},
\eq 
or, if $\cL[u_1](x_1)-ru_1(x_1)=\mu u_1(x_1-1)+\lambda-(r+\mu)x_1 > 0$
\bq 
\cD_2&=&\{-N,-N+1,\ldots,-2\}\cup \{x_1+1,\ldots,N\}.
\eq 
Following our recursive procedure, after $N$ steps, we shall have eliminated the negative integers and thus obtain
\bq 
\cD_N&=&\{-N\}\cup \{x_N,\ldots,N\}
\eq 
for some $x_1 \le x_N \le N$. Note that for $-N \le x \le x_N$, we have
\bq 
u_N(x)&=&x_N\frac{\theta_2^{x+N} - \theta_1^{x+N} }{\theta_2^{x_N+N} - \theta_1^{x_N+N}}.
\eq 
If $\lambda u_N(x_n+1)+\mu u_N(x_N-1)-(r+\lambda+\mu)u_N(x_N)=\lambda -(r+\mu)x_N+\mu x_N\frac{\theta_2^{x_N-1+N} - \theta_1^{x_N-1+N} }{\theta_2^{x_N+N} - \theta_1^{x_N+N}}
\leq 0$,
the stopping region coincides with $\cD_N$, else we define
\bq 
\cD_{N+1}&=&\{-N\}\cup \{x_{N+1},\ldots,N\}, \hbox{ with } x_{N+1}\ =\ x_N+1
\eq 
and
\bq 
u_{N+1}(x)&=&x_{N+1}\frac{\theta_2^{x+N} - \theta_1^{x+N} }{\theta_2^{x_{N+1}+N} - \theta_1^{x_{N+1}+N}}
\eq 
and we repeat the procedure.
\end{exa}

\section{ General state space} 
\subsection{ Countable State Space}

When considering countable finite state space, Dynkin’s formula that has been used in the proofs of Lemma \ref{step0} and \ref{mono_finite} is not directly available, because nothing prevents the payoff to take arbitrarily large values. Nevertheless, we will adapt the strategy used in the case of a finite state space
to build a monotone dynamic approach of the value function in the case of a countable finite state space.

Hereafter, we set some payoff function $\phi\in\bFp\setminus\{0\}$ and $r>0$.
We will construct a subset $D_\iy\subset V$ and a function $u_\iy\in\bFp$ by the following recursive algorithm.
\par
We begin by taking $D_0\df V$ and $u_0\df \phi$. Next, let us assume that $D_n\subset V$ and $u_n\in\bFp$ have been built for some $n\in\ZZ_+$
such that
\begin{align}
\forall x&\in V\setminus D_n, \quad (r+L(x))u_n(x)=\cK[ u_n](x) \label{Pn} \\
\forall x &\in D_n, \quad  u_n(x)=\phi(x).
\end{align}
Observe that it is trivially true for $n=0$.
Then, we define the subset $D_{n+1}$ as follows
\bqn{Dn+1}
D_{n+1}&\df& \{x\in D_n\st \cK[u_n](x)\leq (r +L(x))u_n(x)\}\eqn
where the inequality is understood in $\bRp$.
\par
Next, we consider the stopping time
\bq \label{taun+1}
\tau_{n+1}&\df&\inf\{t\geq 0\st X_t\in D_{n+1}\}\eq
with the usual convention that $\inf \emptyset =+\iy$.
For $m\in\ZZ_+$, define furthermore the stopping time
\bq
\tau_{n+1}^{(m)}&\df&\sigma_m\wedge\tau_{n+1}\eq
and the function $u_{n+1}^{(m)}\in\bFp$ given by
\bqn{umn}
\fo x\in V,\qquad u_{n+1}^{(m)}(x)&\df& \EE_x[\exp(-r \tau_{n+1}^{(m)})u_n(X_{\tau_{n+1}^{(m)}})].\eqn
\par
\begin{rem}\label{rem1}
The non-negative random variable $\exp(-r \tau_{n+1}^{(m)})u_n(X_{\tau_{n+1}^{(m)}})$ is well-defined,
even if $\tau_{n+1}^{(m)}=+\iy$, since the convention $0\times (+\iy)=0$ imposes that $\exp(-r \tau_{n+1}^{(m)})u_n(X_{\tau_{n+1}^{(m)}})=0$
whatever would be $X_{\tau_{n+1}^{(m)}}$, which is not defined in this case.
The occurrence of $\tau_{n+1}^{(m)}=+\iy$ should be quite exceptional: we have
\bq
\{\tau_{n+1}^{(m)}=+\iy\}&=&\{\tau_{n+1}=+\iy\hbox{ and } L(X_{\tau_{n+1}^{(m)}})=0\}\eq
in particular it never happens if $L(x)>0$ for all $x\in V$, i.e.\ when $\tr$ is the only possible absorbing point for $X$.
\end{rem}
Our first result shows that the sequence $(u^{(m)}_{n+1})_{m\in\ZZ_+ }$ is non-decreasing.
\begin{lem}\label{lem1}
We have
\bq
\fo m\in\ZZ_+,\,\fo x\in V,\qquad 
u_{n+1}^{(m)}(x)&\leq &u_{n+1}^{(m+1)}(x)\eq
\end{lem}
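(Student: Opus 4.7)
The strategy is to reduce the claim to a one-step comparison by applying the strong Markov property at the $m$-th jump time $\sigma_m$. Since $\sigma_m\leq \sigma_{m+1}$, we automatically have $\tau_{n+1}^{(m)}\leq \tau_{n+1}^{(m+1)}$, and these two stopping times actually coincide on the event $\{\tau_{n+1}\leq \sigma_m\}$. On that event, the two integrands defining $u_{n+1}^{(m)}(x)$ and $u_{n+1}^{(m+1)}(x)$ in \eqref{umn} are literally the same random variable, so the comparison is trivial. All the work is concentrated on the complementary event $\{\tau_{n+1}>\sigma_m\}$, on which $\tau_{n+1}^{(m)}=\sigma_m$ and in particular $X_{\sigma_m}\notin D_{n+1}$.

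On this event, the time-shifted stopping time $\tau_{n+1}^{(m+1)}-\sigma_m$ coincides with the stopping time $\tau_{n+1}^{(1)}=\sigma_1\wedge \tau_{n+1}$ of the chain restarted at $X_{\sigma_m}$. Applying the strong Markov property at $\sigma_m$ therefore yields
\bq
\EE_x[\exp(-r\tau_{n+1}^{(m+1)})u_n(X_{\tau_{n+1}^{(m+1)}})\indic_{\tau_{n+1}>\sigma_m}]=\EE_x[\exp(-r\sigma_m)u_{n+1}^{(1)}(X_{\sigma_m})\indic_{\tau_{n+1}>\sigma_m}],
\eq
whereas the counterpart for $u_{n+1}^{(m)}$ reads $\EE_x[\exp(-r\sigma_m)u_n(X_{\sigma_m})\indic_{\tau_{n+1}>\sigma_m}]$. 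The lemma therefore reduces to the pointwise inequality $u_{n+1}^{(1)}(y)\geq u_n(y)$ for every $y\notin D_{n+1}$.

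To establish this one-step inequality, fix $y\notin D_{n+1}$. Because $X_0=y\notin D_{n+1}$, one has $\tau_{n+1}\geq \sigma_1$ under $\PP_y$ and hence $\tau_{n+1}^{(1)}=\sigma_1$. When $L(y)>0$, the independence of $\sigma_1\sim\mathrm{Exp}(L(y))$ and $X_{\sigma_1}\sim L(y,\cdot)/L(y)$ yields the direct computation
\bq
u_{n+1}^{(1)}(y)=\frac{L(y)}{r+L(y)}\cdot\frac{\cK[u_n](y)}{L(y)}=\frac{\cK[u_n](y)}{r+L(y)},
\eq
and the same identity persists for $L(y)=0$ thanks to the convention $0\times(+\iy)=0$. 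If $y\in D_n\setminus D_{n+1}$, the defining inequality of $D_{n+1}$ in \eqref{Dn+1} fails strictly, giving $\cK[u_n](y)>(r+L(y))u_n(y)$ and hence $u_{n+1}^{(1)}(y)>u_n(y)$; and if $y\notin D_n$, the inductive identity \eqref{Pn} gives $u_{n+1}^{(1)}(y)=u_n(y)$. In both cases the desired inequality holds.

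The delicate point I anticipate is the careful bookkeeping associated with the enlarged value space $\bFp$: $u_n$ may take the value $+\iy$ and the stopping times $\sigma_m$, $\tau_{n+1}^{(m+1)}$ may be infinite. One must systematically invoke the convention $0\times(+\iy)=0$ to ensure that all integrands are well-defined, and to check that contributions from $\{\sigma_m=+\iy\}$ vanish on both sides of the strong Markov identity; one must also verify that $u_{n+1}^{(1)}$ is measurable so that $u_{n+1}^{(1)}(X_{\sigma_m})$ is $\cG_{\sigma_m}$-measurable. Once these technicalities are in place, the remainder of the argument is purely algebraic.
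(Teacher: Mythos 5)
Your proof is correct and follows essentially the same route as the paper's: split on $\{\tau_{n+1}\leq\sigma_m\}$ versus its complement, use the strong Markov property to reduce to the one-step identity $\EE_y[\exp(-r\sigma_1)u_n(X_{\sigma_1})]=\cK[u_n](y)/(r+L(y))$, and conclude from the definition of $D_{n+1}$ together with \eqref{Pn} that this dominates $u_n(y)$ for $y\notin D_{n+1}$. The only cosmetic difference is that you package the inner expectation as $u_{n+1}^{(1)}(X_{\sigma_m})$ while the paper writes it out directly; the content is identical.
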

\begin{proof}
We first compute 
\bq
u_{n+1}^{(m+1)}(x)&\df& \EE_x[\exp(-r \tau_{n+1}^{(m+1)})u_n(X_{\tau_{n+1}^{(m+1)}})]\\
&=&
\EE_x[\un_{\tau_{n+1}\leq \sigma_m}\exp(-r \tau_{n+1}^{(m+1)})u_n(X_{\tau_{n+1}^{(m+1)}})]+\EE_x[\un_{\tau_{n+1}> \sigma_m}\exp(-r \tau_{n+1}^{(m+1)})u_n(X_{\tau_{n+1}^{(m+1)}})]\eq
Note that on the event $\{\tau_{n+1}\leq \sigma_m\}$, we have that $\tau_{n+1}^{(m+1)}=\tau_{n+1}=\tau_{n+1}^{(m)}$, so the first
term in the above r.h.s.\ is equal to
\bqn{EE1}
\EE_x[\un_{\tau_{n+1}\leq \sigma_m}\exp(-r \tau_{n+1}^{(m+1)})u_n(X_{\tau_{n+1}^{(m+1)}})]&=&\EE_x[\un_{\tau_{n+1}\leq \sigma_m}\exp(-r \tau_{n+1}^{(m)})u_n(X_{\tau_{n+1}^{(m)}})]\eqn
On the event $\{\tau_{n+1}> \sigma_m\}$, we have that $\tau_{n+1}^{(m+1)}=\tau_{n+1}^{(m)}+\sigma_1\circ \theta_{\tau_{n+1}^{(m)}}$,
where $ \theta_{t}$, for $t\geq 0$, is the shift operator by time $t\geq 0$ on the underlying canonical probability space 
$\DD(\RR_+,\bar V)$ of cÃ dlÃ g trajectories.
Using the Strong Markov property of $X$, we get that
\bqn{EE2}
\hskip-10mm\EE_x[\un_{\tau_{n+1}> \sigma_m}\exp(-r \tau_{n+1}^{(m+1)})u_n(X_{\tau_{n+1}^{(m+1)}})]&=&
\EE_x\lt[\un_{\tau_{n+1}> \sigma_m}\exp(-r \tau_{n+1}^{(m)})\EE_{X_{\tau_{n+1}^{(m)}}}[\exp(-r \sigma_1)u_n(X_{\sigma_1})]\rt]
\eqn
For $y\in V$, consider two situations:
\begin{itemize}
\item if $L(y)=0$, we have a.s.\ $\sigma_1=+\iy$ and as in Remark \ref{rem1}, we get
\bq
\EE_y[\exp(-r \sigma_1)u_n(X_{\sigma_1})]&=&0\eq
\item
 if $L(y)>0$, we compute, in $\bRp$,
\bq
\EE_y[\exp(-r \sigma_1)u_n(X_{\sigma_1})]&=&\int_0^{+\iy} \exp(-rs) L(y)\exp(-L(y)s)\sum_{z\in V\setminus\{y\}} \frac{L(y,z)}{L(y)}u_n(z)\\
&=&\int_0^{+\iy} \exp(-rs) \exp(-L(y)s)\sum_{z\in V\setminus\{y\}} L(y,z)u_n(z)\\
&=&\frac1{r+L(y)}\cK[u_n](y)\eq
\end{itemize}
By our conventions, the equality 
\bqn{rhs}
\EE_y[\exp(-r \sigma_1)u_n(X_{\sigma_1})]&=&\frac1{r+L(y)}\cK[u_n](y)
\eqn
is then true for all $y\in V$.
\par
For $y\in D_n$, due to \eqref{Pn}, the r.h.s.\ is equal  to $u_n(y)$.
For $y\in D_n\setminus D_{n+1}$, by definition of $D_{n+1}$ in 
\eqref{Dn+1}, the r.h.s.\ of \eqref{rhs} is bounded below by $u_n(y)$.
It follows that for any 
$y\not\in D_{n+1}$, 
\bq
\EE_y[\exp(-r \sigma_1)u_n(X_{\sigma_1})]&\geq & u_n(y)\eq
\par
On the event $\{\tau_{n+1}> \sigma_m\}$, we have $X_{\tau_{n+1}^{(m)}}\not\in D_{n+1}$ and thus
\bq
\EE_{X_{\tau_{n+1}^{(m)}}}[\exp(-r \sigma_1)u_n(X_{\sigma_1})]&\geq & u_n(X_{\tau_{n+1}^{(m)}})\eq
Coming back to \eqref{EE2}, we deduce
that
\bq
\EE_x[\un_{\tau_{n+1}> \sigma_m}\exp(-r \tau_{n+1}^{(m+1)})u_n(X_{\tau_{n+1}^{(m+1)}})]&\geq & 
\EE_x[\un_{\tau_{n+1}> \sigma_m}\exp(-r \tau_{n+1}^{(m)})u_n(X_{\tau_{n+1}^{(m)}})]\eq
and taking into account  \eqref{EE1}, we conclude that
\bq
u_{n+1}^{(m+1)}(x)&\geq &\EE_x[\un_{\tau_{n+1}\leq  \sigma_m}\exp(-r \tau_{n+1}^{(m)})u_n(X_{\tau_{n+1}^{(m)}})]+\EE_x[\un_{\tau_{n+1}> \sigma_m}\exp(-r \tau_{n+1}^{(m)})u_n(X_{\tau_{n+1}^{(m)}})]\\
&=&\EE_x[\exp(-r \tau_{n+1}^{(m)})u_n(X_{\tau_{n+1}^{(m)}})]\\
&=&u_{n+1}^{(m)}(x)\eq
\end{proof}
\par
The monotonicity property of Lemma \ref{lem1} enables us to define the function $u_{n+1}\in\bFp$ via
\bq
\fo x\in V,\qquad u_{n+1}(x)&\df& \lim_{m\ri\iy} u_{n+1}^{(m)}(x)\eq
ending the iterative construction of the pair $(D_{n+1}, u_{n+1})$ from $(D_{n}, u_{n})$.
It remains to check 
that:
\begin{lem}
The assertion \eqref{Pn} is satisfied with $n$ replaced by $n+1$.
\end{lem}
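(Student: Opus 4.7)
The plan is to verify the two parts of \eqref{Pn} at level $n+1$ by unwinding the definition of $u_{n+1}^{(m)}$ via the Strong Markov property at the first jump time $\sigma_1$, and then letting $m\to\iy$ using the monotone convergence provided by Lemma \ref{lem1}.

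The case $x\in D_{n+1}$ is immediate. Since $D_{n+1}\subset D_n$, starting from $x$ one has $\tau_{n+1}=0$, hence $\tau_{n+1}^{(m)}=0$ and $X_{\tau_{n+1}^{(m)}}=x$ for every $m$, so $u_{n+1}^{(m)}(x)=u_n(x)=\phi(x)$ by the induction hypothesis; letting $m\to\iy$ gives $u_{n+1}(x)=\phi(x)$.

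The substance lies in the case $x\not\in D_{n+1}$. I would first observe that $L(x)>0$: whenever $y\in D_n$ satisfies $L(y)=0$, one has $\cK[u_n](y)=0\leq (r+L(y))u_n(y)$, so $y\in D_{n+1}$; together with $D_0=V$, this recursively shows that any absorbing point of $X$ lies in every $D_n$. In particular $\sigma_1<+\iy$ almost surely under $\PP_x$. Since also $\tau_{n+1}\geq \sigma_1$ a.s., the decompositions $\tau_{n+1}=\sigma_1+\tau_{n+1}\circ\theta_{\sigma_1}$ and $\sigma_m=\sigma_1+\sigma_{m-1}\circ\theta_{\sigma_1}$ yield, for every $m\geq 1$,
\[
\tau_{n+1}^{(m)}\;=\;\sigma_1 + \tau_{n+1}^{(m-1)}\circ\theta_{\sigma_1}.
\]
Applying the Strong Markov property at $\sigma_1$ to \eqref{umn} and then the computation \eqref{rhs} already carried out in the proof of Lemma \ref{lem1}, I obtain
\[
u_{n+1}^{(m)}(x)\;=\;\EE_x\!\left[\exp(-r\sigma_1)\, u_{n+1}^{(m-1)}(X_{\sigma_1})\right]\;=\;\frac{1}{r+L(x)}\,\cK\!\left[u_{n+1}^{(m-1)}\right](x).
\]

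To conclude, I let $m\to\iy$. By Lemma \ref{lem1} the sequence $m\mapsto u_{n+1}^{(m-1)}(y)$ is non-decreasing for every $y\in V$, so monotone convergence applies to the non-negative series defining $\cK[u_{n+1}^{(m-1)}](x)$ in \eqref{K}, giving $(r+L(x))\,u_{n+1}(x)=\cK[u_{n+1}](x)$, which is precisely \eqref{Pn} at level $n+1$. The only delicate point in the whole argument is the interchange of limit and the countable sum defining $\cK$: because $u_n\in\bFp$ may take the value $+\iy$ this has to be justified via monotone convergence rather than dominated convergence, but the non-negativity of all quantities involved and the conventions spelled out before Remark \ref{rem1} make this completely harmless.
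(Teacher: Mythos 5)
Your proof is correct and follows essentially the same route as the paper's: decompose $\tau_{n+1}^{(m)}$ at the first jump time $\sigma_1$, apply the Strong Markov property together with the one-step computation \eqref{rhs} from Lemma \ref{lem1}, and pass to the limit in $m$ by monotone convergence. Your two small additions --- checking that $L(x)>0$ off $D_{n+1}$ (absorbing points never leave the sets $D_n$) and verifying $u_{n+1}=\phi$ on $D_{n+1}$, which the paper defers to the proof of Proposition \ref{excessive} --- are correct and harmless refinements.
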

\begin{proof}
Consider $x\in V\setminus D_{n+1}$ for which $\tau^{(m+1)}_{n+1} \ge \sigma_1$, $\P_x$ a. s..
For the Markov process $X$ starting from $x$, we have for any $m\in\ZZ_+$,
\bq
\tau^{(m+1)}_{n+1}&=&\sigma_1+\tau^{(m)}_{n+1}\circ \sigma_1\eq
The Strong Markov property of $X$ then implies that
\bq
u_{n+1}^{(m+1)}(x)&=&\EE_x\lt[\exp(-r\sigma_1)\EE_{X_{\sigma_1}}[\exp(-r\tau^{(m)}_{n+1})u_n(X_{\tau^{(m)}_{n+1}})]\rt]\\
&=&\EE_x\lt[\exp(-r\sigma_1)u_{n+1}^{(m)}(X_{\sigma_1})\rt]\\
&=&\frac{1}{r+L(x)}\cK[u_{n+1}^{(m)}](x)
\eq
by resorting again to the computations of the proof of Lemma \ref{lem1}.
Monotone convergence insures that
\bq
\lim_{m\ri\iy}\cK[u_{n+1}^{(m)}](x)&=&\cK[u_{n+1}](x)\eq
so we get that for $x\in V\setminus D_{n+1}$,
\bq
(r+L(x))u_{n+1}(x)&=&\cK[u_{n+1}](x)\eq
as wanted.\end{proof}
\par
The sequence $(D_n)_{n\in\ZZ_+}$ is non-increasing by definition, as a consequence we can define
\bq
D_\iy&\df&\bigcap_{n\in\ZZ_+}D_n \eq
\par
From Lemma \ref{lem1}, we deduce that for any $n\in\ZZ_+$,
\bq
\fo x\in V,\qquad u_{n+1}(x)&\geq & u_{n+1}^{(0)}(x)\\
&=&u_n(x)\eq
It follows that we can define the function $u_\iy\in\bFp$ as the non-decreasing limit
\bq
\fo x\in V,\qquad u_\iy(x)&=&\lim_{n\ri\iy}u_n(x)\ \in\ \bRp\eq
\par
\me
The next two propositions establish noticeable properties of the pair $(D_\iy,u_\iy)$:
\begin{pro} \label{excessive}
We have:
\bq
\fo x\in D_\iy,\qquad\lt\{\begin{array}{rcl}
u_\iy(x)&=&\phi(x)\\
\cK[u_\iy](x)&\leq &(r+L(x))u_\iy(x)\end{array}\rt.\\
\fo x\in V\setminus D_\iy,\qquad\lt\{\begin{array}{rcl}
u_\iy(x)&\geq &\phi(x)\\
\cK[u_\iy](x)&= &(r+L(x))u_\iy(x)\end{array}\rt.
\eq
\end{pro}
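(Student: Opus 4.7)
The plan is to unpack the defining invariant of the iteration and then push everything to the limit by monotone convergence, treating the two subsets of $V$ separately.

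First, for $x\in D_\iy$, the sequence $(D_n)$ being non-increasing implies $x\in D_n$ for every $n\in\ZZ_+$. The inductively maintained property that $u_n=\phi$ on $D_n$ then gives $u_n(x)=\phi(x)$ for all $n$, hence $u_\iy(x)=\phi(x)$. Moreover, since $x\in D_{n+1}$ for every $n$, the very definition \eqref{Dn+1} yields $\cK[u_n](x)\leq (r+L(x))u_n(x)$ for each $n$. I would then pass to the limit $n\to\iy$: the right-hand side is trivial because $r+L(x)<\iy$ and $u_n(x)\nearrow u_\iy(x)$ in $\bRp$; for the left-hand side, $\cK[u_n](x)=\sum_{y\neq x} L(x,y)u_n(y)$ with non-negative summands that increase pointwise to $L(x,y)u_\iy(y)$, so the monotone convergence theorem for sums gives $\cK[u_n](x)\nearrow \cK[u_\iy](x)$. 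The inequality is preserved in the limit.

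Second, for $x\in V\setminus D_\iy=\bigcup_n (V\setminus D_n)$, the fact that $(D_n)$ is non-increasing provides some $n_0\in\ZZ_+$ such that $x\notin D_n$ for every $n\geq n_0$. The invariant \eqref{Pn} just re-established in the preceding lemma then gives $(r+L(x))u_n(x)=\cK[u_n](x)$ for all $n\geq n_0$, and the same monotone-convergence argument as above lets me pass to the limit to conclude $(r+L(x))u_\iy(x)=\cK[u_\iy](x)$. For the remaining inequality $u_\iy(x)\geq\phi(x)$, I would observe that since $\tau_{n+1}^{(0)}=\sigma_0\wedge\tau_{n+1}=0$, Lemma \ref{lem1} implies $u_{n+1}(x)\geq u_{n+1}^{(0)}(x)=u_n(x)$, so $(u_n)$ is non-decreasing from $u_0=\phi$, whence $u_\iy\geq\phi$ on all of $V$.

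The only real delicacy I anticipate is justifying $\cK[u_n](x)\to\cK[u_\iy](x)$ when $u_\iy$ may take the value $+\iy$: this is exactly the situation the convention $0\times(+\iy)=0$ and the definition \eqref{K} of $\cK$ on $\bFp$ were set up to accommodate, and monotone convergence for series of non-negative (possibly infinite) terms handles it cleanly. Once this point is granted, everything else is a direct reading of the invariant and of the definition of $D_{n+1}$.
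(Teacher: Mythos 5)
Your proposal is correct and follows essentially the same route as the paper: pass to the limit in the defining inequality of $D_{n+1}$ for $x\in D_\iy$ and in the invariant \eqref{Pn} for $x\in V\setminus D_\iy$, using monotone convergence for the non-negative series defining $\cK$ (a point you in fact justify more explicitly than the paper does), and get $u_\iy\geq\phi$ from the monotonicity of $(u_n)_{n}$ started at $u_0=\phi$. The one spot where the paper is more careful is the claim that $u_n=\phi$ on $D_n$: since the lemma closing the recursive construction only propagates \eqref{Pn} and not this second half of the invariant, the paper re-establishes it by induction inside the proof (for $x\in D_{n+1}$ one has $\tau_{n+1}=0$, hence $u_{n+1}^{(m)}(x)=u_n(x)=\phi(x)$ for every $m$), whereas you cite it as already maintained --- a one-line addition would make your argument self-contained.
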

\begin{proof}
Since $u_0=\phi$, the  fact that $(u_n)_{n\in\ZZ_+}$ is a non-decreasing sequence   implies
that $u_\iy\geq \phi$. To show there is an equality on $D_\iy$, it is sufficient
to show that
\bq
\fo n\in\ZZ_+,\,\fo x\in D_n,\qquad u_n(x)&=&\phi(x)\eq
This is proven by an iterative argument on $n\in\ZZ_+$.
For $n=0$, it corresponds to the equality $u_0=\phi$.
Assume that $u_n=\phi$ on $D_n$, for some $n\in\ZZ_+$.
For $x\in D_{n+1}$, we have $\tau_{n+1}=0$ and thus for 
 any $m\in\ZZ_+$, we get $\tau_{n+1}^{(m)}=0$.
 From \eqref{umn}, we deduce that
 \bq
 \fo x\in D_{n+1},\qquad u^{(m)}_{n+1}\ =\ u_n(x)\ =\ \phi(x)\eq
 Letting $m$ go to infinity, it yields that $u_{n+1}=\phi$ on $D_{n+1}$.\par\sm
 Consider $x\in V\setminus D_\iy$. There exists $N(x)\in\ZZ_+$ such that for any $n\geq N(x)$, we have $x\in V\setminus D_n$.
 Then passing at the limit for large $n$ in \eqref{Pn}, we get, via another use of monotone convergence, that
 \bq
 \fo x\in V\setminus D_{\iy},\qquad (r+L(x))u_\iy(x)=\cK[u_\iy](x)\eq
 For $x\in D_\iy$, we have $x\in D_{n+1}$ for any $n\in \ZZ_+$ and thus from \eqref{Dn+1},
 we have
$ \cK[u_{n}](x)\leq (r+L(x))u_{n}(x)$. Letting $n$ go to infinity, we deduce that
\bq
\fo x\in D_\iy,\qquad  \cK[u_{\iy}](x)\leq (r+L(x))u_{\iy}(x)\eq
\end{proof}
\par\sm

In fact, $u_\iy$ is a strict majorant of $\phi$ on $V\setminus D_\iy$ as proved in the following
\begin{pro}\label{strict}
We have
\bq
\fo x\in V\setminus D_\iy, \qquad u_\iy(x)&>& \phi(x)\eq
It follows that
\bq
D_\iy&=&\{x\in V\st u_\iy(x)=\phi(x)\}
\eq
\end{pro}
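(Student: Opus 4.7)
The plan is to prove, by induction on $n\in\ZZ_+$, the statement
$$
(\star)_n\qquad \fo x\in V\setminus D_n,\qquad u_n(x)\ >\ \phi(x).
$$
Once $(\star)_n$ holds for every $n$, the proposition follows: for any $x\in V\setminus D_\iy$ there is $n_0$ with $x\in V\setminus D_n$ for all $n\geq n_0$, so $u_\iy(x)\geq u_{n_0}(x)>\phi(x)$; the characterization $D_\iy=\{u_\iy=\phi\}$ then combines this strict inequality with the equality $u_\iy=\phi$ on $D_\iy$ already given by Proposition \ref{excessive}.

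Before running the induction, I would record two preliminary facts that remove pathological cases from consideration. First, if $\phi(x)=+\iy$ or $L(x)=0$, then an immediate check using $u_0=\phi$ and the definition \eqref{Dn+1} shows by induction on $n$ that $x\in D_n$ for every $n$, hence $x\in D_\iy$; equivalently, for any $x\in D_n\setminus D_{n+1}$ one automatically has $\phi(x)<+\iy$ and $L(x)>0$, because the strict inequality $\cK[u_n](x)>(r+L(x))u_n(x)$ with $u_n(x)=\phi(x)$ rules out an infinite right-hand side. This means all arithmetic that appears below in Case 2 of the induction makes sense in $\RR_+$.

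The base case $n=0$ is vacuous since $D_0=V$. For the inductive step, assume $(\star)_n$ and fix $x\in V\setminus D_{n+1}$. In Case 1, $x\in V\setminus D_n$, so by the inductive hypothesis $u_n(x)>\phi(x)$, and the monotonicity $u_{n+1}\geq u_{n+1}^{(0)}=u_n$ (from Lemma \ref{lem1}) yields $u_{n+1}(x)>\phi(x)$. In Case 2, $x\in D_n\setminus D_{n+1}$, so $u_n(x)=\phi(x)$ and, by the preliminary fact and \eqref{Dn+1}, we have the finite-valued strict inequality
$$
\cK[u_n](x)\ >\ (r+L(x))\phi(x),\qquad L(x)>0.
$$
Since $x\notin D_{n+1}$ we have $\tau_{n+1}\geq\sigma_1$ a.s.\ under $\PP_x$, hence $\tau_{n+1}^{(1)}=\sigma_1\wedge\tau_{n+1}=\sigma_1$, and the computation \eqref{rhs} from the proof of Lemma \ref{lem1} gives
$$
u_{n+1}^{(1)}(x)\ =\ \EE_x[\exp(-r\sigma_1)u_n(X_{\sigma_1})]\ =\ \frac{\cK[u_n](x)}{r+L(x)}\ >\ \phi(x).
$$
Since $u_{n+1}(x)\geq u_{n+1}^{(1)}(x)$ by Lemma \ref{lem1}, this closes the induction.

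The main obstacle is really Case 2: one needs to convert the strict inequality built into the definition of $D_{n+1}$ into a strict inequality on $u_{n+1}$, and the cleanest way is to evaluate $u_{n+1}^{(1)}$ explicitly and exploit the fact that, starting from $x\notin D_{n+1}$, the process cannot reach $D_{n+1}$ before the first jump $\sigma_1$. All other steps (the monotone passage to the limit, and the base case) are formal consequences of the construction and of Lemma \ref{lem1}.
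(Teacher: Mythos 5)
Your proof is correct and follows essentially the same route as the paper: both arguments isolate the unique transition index $n$ with $x\in D_n\setminus D_{n+1}$, exploit the strict inequality $\cK[u_n](x)>(r+L(x))u_n(x)$ built into the definition of $D_{n+1}$, and convert it into $u_{n+1}(x)>u_n(x)=\phi(x)$ via the identity $\EE_x[e^{-r\sigma_1}u_n(X_{\sigma_1})]=\cK[u_n](x)/(r+L(x))$ (you route this through $u_{n+1}^{(1)}$ and the monotonicity in $m$, the paper through $(r+L)u_{n+1}=\cK[u_{n+1}]\geq\cK[u_n]$ — the same computation). Your explicit treatment of the edge cases $\phi(x)=+\iy$ and $L(x)=0$ is a welcome addition that the paper leaves implicit.
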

\begin{proof}
Consider $x\in V\setminus D_\iy$, there exists a first integer $n\in\ZZ_+$ such that $x\in D_n$ and $x\not\in D_{n+1}$.
From \eqref{Pn} and $x\in V\setminus D_{n+1}$, we deduce that
\bq
\cK[u_{n+1}](x)&= &(r+L(x))u_{n+1}(x) 
\eq
From \eqref{Dn+1} and $x\in V\setminus D_{n+1}$, we get
\bq
\cK[u_{n}](x)&>& (r+L(x))u_{n}(x)\eq
Putting together these two inequalities and the fact that $\cK[u_{n+1}]\geq \cK[u_n]$, we end up with
\bq
(r+L(x))u_{n+1}(x) &>&(r+L(x))u_{n}(x)\eq
which implies that 
\bq
\phi(x)&\leq & u_{n}(x)\\
&<&u_{n+1}(x)\\
&\leq & u_\iy(x)\eq
namely $\phi(x)<u_\iy(x)$.\par
This argument shows that 
\bq
\{x\in V\st u_\iy(x)=\phi(x)\}&\subset & D_\iy\eq
The reverse inclusion is deduced from Proposition \ref{excessive}.
\end{proof}\par
Another formulation of the functions $u_n$, for $n\in\NN$, will be very useful for the
characterization of their limit $u_\iy$.
For $n,m\in\ZZ_+$, let us modify Definition \eqref{umn} to define a function $\wi u_{n+1}^{(m)}$ as
\bqn{umn2}
\fo x\in V,\qquad \wi u_{n+1}^{(m)}(x)&\df& \EE_x\lt[\exp(-r \tau_{n+1}^{(m)})\phi(X_{\tau_{n+1}^{(m)}})\rt]\eqn
A priori there is no monotonicity with respect to $m$, so we define
\bq
\fo x\in V,\qquad \wi u_{n+1}(x)&\df& \liminf_{m\ri\iy} \wi u_{n+1}^{(m)}(x)\eq
\par
A key observation is:
\begin{lem}\label{uwiu}
For any $n\in\NN$, we have 
$\wi u_n=u_n$.
\end{lem}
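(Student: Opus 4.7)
The plan is to proceed by induction on $n \in \NN$. The base case $n = 1$ is immediate: since $u_0 = \phi$, the definitions \eqref{umn} and \eqref{umn2} give $u_1^{(m)} = \wi u_1^{(m)}$ for every $m$, and since $(u_1^{(m)})_m$ is non-decreasing (Lemma \ref{lem1}) its monotone limit agrees with its $\liminf$, hence $u_1 = \wi u_1$.

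For the inductive step, I would assume $u_n = \wi u_n$ and show $u_{n+1} = \wi u_{n+1}$. The direction $\wi u_{n+1} \leq u_{n+1}$ is immediate: the sequence $(u_k)_k$ is non-decreasing and starts at $u_0 = \phi$, so $\phi \leq u_n$ pointwise, and thus $\wi u_{n+1}^{(m)} \leq u_{n+1}^{(m)}$ for every $m$; taking $\liminf$ on the left and the monotone limit on the right yields the inequality.

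The converse $u_{n+1} \leq \wi u_{n+1}$ is the main work. I would fix $M \in \ZZ_+$ and aim to show $u_{n+1}^{(M)}(x) \leq \wi u_{n+1}(x)$; letting $M \to \iy$ then concludes via the monotone convergence $u_{n+1}^{(M)} \uparrow u_{n+1}$. Inserting the induction hypothesis $u_n = \liminf_p \wi u_n^{(p)}$ inside the expectation defining $u_{n+1}^{(M)}(x)$ and applying Fatou's lemma (the integrand is non-negative by our conventions), I get
\[
u_{n+1}^{(M)}(x) \leq \liminf_{p \to \iy} \EE_x\bigl[\exp(-r \tau_{n+1}^{(M)})\, \wi u_n^{(p)}(X_{\tau_{n+1}^{(M)}})\bigr].
\]
The strong Markov property at $\tau_{n+1}^{(M)}$, used as in the proof of Lemma \ref{lem1}, rewrites the inner expectation as $\EE_x[\exp(-r S_{M,p}) \phi(X_{S_{M,p}})]$, with the concatenated stopping time $S_{M,p} := \tau_{n+1}^{(M)} + \tau_n^{(p)} \circ \theta_{\tau_{n+1}^{(M)}}$. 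Using the inclusion $D_{n+1} \subset D_n$, I would check that $S_{M,p} \leq \tau_{n+1}^{(M+p)}$, with strict inequality occurring precisely when $X_{S_{M,p}} \in D_n \setminus D_{n+1}$; on that event $\phi(X_{S_{M,p}}) = u_n(X_{S_{M,p}})$ by Proposition \ref{excessive}. A second application of the strong Markov property at $S_{M,p}$ should then allow to bound the composite expectation by $\wi u_{n+1}^{(M+p)}(x)$ up to correction terms that vanish in the $\liminf$ as $p \to \iy$, so that $\liminf_p \wi u_{n+1}^{(M+p)}(x) = \wi u_{n+1}(x)$ gives the required bound.

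\textbf{Expected main obstacle.} The authors emphasise that $(\wi u_{n+1}^{(m)})_m$ need not be monotone in $m$, so only Fatou-type estimates (and not monotone or dominated convergence) are available throughout. The delicate step is precisely the second Markov reduction outlined above: on the event $\{X_{S_{M,p}} \in D_n \setminus D_{n+1}\}$ the composite stopping halts at $D_n$ while $\tau_{n+1}^{(M+p)}$ would continue until $D_{n+1}$, and the induction hypothesis must be fed back in a second nested Fatou estimate (exploiting that $u_n = \phi$ on $D_n$) to show the residual contribution is absorbed. Ensuring the accumulated corrections vanish in the iterated $\liminf$ is where most of the technical effort lies.
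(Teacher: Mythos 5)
Your overall strategy --- induction on $n$, the easy inequality $\wi u_{n+1}\leq u_{n+1}$ from $u_n\geq\phi$, and the hard inequality via Fatou's lemma applied to $u_n=\liminf_p\wi u_n^{(p)}$ followed by the strong Markov property at $\tau_{n+1}^{(M)}$ --- is exactly the paper's. You have moreover put your finger on a real subtlety: the composite time produced by this route is $S_{M,p}=\tau_{n+1}^{(M)}+\tau_n^{(p)}\circ\theta_{\tau_{n+1}^{(M)}}$, whose inner leg stops at $D_n$, and this is only $\leq\tau_{n+1}^{(M+p)}$, with a genuine discrepancy on the event that the chain lands in $D_n\setminus D_{n+1}$. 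The paper's own proof does not confront this point: in its strong Markov step it writes the inner conditional expectation with the stopping time $\tau_{n+1}^{(l)}$, i.e., it treats $\wi u_n^{(l)}$ as if it were $\wi u_{n+1}^{(l)}$, for which the concatenation identity $\tau_{n+1}^{(m)}+\tau_{n+1}^{(l)}\circ\theta_{\tau_{n+1}^{(m)}}=\tau_{n+1}^{(m+l)}$ is exact, no correction terms appear, and the argument closes with $\liminf_l\wi u_{n+1}^{(m+l)}=\wi u_{n+1}$.

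However, your proposal does not actually close the gap you identified: it only asserts that ``the accumulated corrections vanish in the iterated $\liminf$'', and the mechanism you sketch for this cannot work as stated. At a point $y=X_{S_{M,p}}\in D_n\setminus D_{n+1}$ one has $\tau_n=0$ under $\PP_y$, hence $\wi u_n^{(q)}(y)=\phi(y)$ for every $q$; re-inserting the induction hypothesis $u_n=\liminf_q\wi u_n^{(q)}$ there returns $\phi(y)$ immediately and does not advance the composite stopping time towards $D_{n+1}$, so the nested Fatou iteration stalls rather than absorbing the residual term. What is actually needed at such a point is the inequality $\phi(y)\leq\wi u_{n+1}(y)$ (stopping in $D_n\setminus D_{n+1}$ and collecting $\phi$ is dominated by the continuation value computed with $\phi$), and that is essentially an instance of the statement being proved, so it cannot simply be invoked. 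Either supply an independent argument for it (for instance by exploiting the defining inequality $\cK[u_n](y)>(r+L(y))\phi(y)$ on $D_n\setminus D_{n+1}$ together with a one-step expansion of $\wi u_{n+1}^{(l)}(y)$), or justify replacing $\wi u_n^{(l)}$ by $\wi u_{n+1}^{(l)}$ after the Fatou step as the paper implicitly does; as written, the decisive step of your proof is missing.
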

\begin{proof}
Since for any $n\in\ZZ_+$, we have $u_n\geq \phi$, we get from a direct comparison between \eqref{umn}  and \eqref{umn2}
that for any $m\in\ZZ_+$, $\wi u_{n+1}^{(m)}\leq u_{n+1}^{(m)}$, so  letting $m$ go to infinity, we deduce that
\bqn{wiuu}
\wi u_{n+1}&\leq & u_{n+1}\eqn
The reverse inequality is proven by an iteration over $n$.
\par
More precisely, since $u_0=\phi$, we get by definition that $\wi u_1= u_1$.
\par
Assume that the equality $\wi u_n= u_n$ is true for some $n\in\NN$, and let us show that $\wi u_{n+1}= u_{n+1}$.
For any $m\in\ZZ_+$, we have
\bq
\fo x\in V,\qquad u_{n+1}^{(m)}(x)&=& \EE_x\lt[\exp(-r \tau_{n+1}^{(m)})u_n(X_{\tau_{n+1}^{(m)}})\rt]\\
&=& \EE_x\lt[\exp(-r \tau_{n+1}^{(m)})\wi u_n(X_{\tau_{n+1}^{(m)}})\rt]\\
&=&\EE_x\lt[\exp(-r \tau_{n+1}^{(m)})\liminf_{l\ri\iy}\wi u_n^{(l)}(X_{\tau_{n+1}^{(m)}})\rt]\\
&\leq & \liminf_{l\ri\iy}\EE_x\lt[\exp(-r \tau_{n+1}^{(m)})\wi u_n^{(l)}(X_{\tau_{n+1}^{(m)}})\rt]\eq
where we used Fatou's lemma.
From \eqref{umn2} and the Strong Markov property, we deduce that
\bq
\EE_x\lt[\exp(-r \tau_{n+1}^{(m)})\wi u_n^{(l)}(X_{\tau_{n+1}^{(m)}})\rt]
&=&
\EE_x\lt[\exp(-r \tau_{n+1}^{(m)})\EE_{X_{\tau_{n+1}^{(m)}}}\lt[\exp(-r \tau_{n+1}^{(l)})\phi(X_{\tau_{n+1}^{(l)}})\rt]\rt]\\
&=&\EE_x\lt[\exp(-r \tau_{n+1}^{(m+l)})\phi(X_{\tau_{n+1}^{(m+l)}})\rt]\eq
It follows that
\bq
u_{n+1}^{(m)}(x)&\leq &  \liminf_{l\ri\iy}\EE_x\lt[\exp(-r \tau_{n+1}^{(m+l)})\phi(X_{\tau_{n+1}^{(m+l)}})\rt]\\
&=&\wi u_{n+1}(x)
\eq
It remains to let $m$ go to infinity to get $u_{n+1}\leq \wi u_{n+1}$
and $u_{n+1}= \wi u_{n+1}$, taking into account \eqref{wiuu}.
\end{proof}
\par\sm
Let $\cT$ be the set of $\bRp$-valued stopping times with respect to the filtration generated by $X$.
For $\tau\in \cT$ and $m\in \ZZ_+$, we define
\bq
\tau^{(m)}&\df& \sigma_m\wedge \tau\eq
Extending the observation of Remark \ref{rem1}, it appears that
 for any $m\in\ZZ_+$,
the quantity 
\bqn{um}
\fo x\in V,\qquad u^{(m)}(x)&\df&\sup_{\tau\in \cT}\EE_x\lt[\exp(-r\tau^{(m)})\phi(X_{\tau^{(m)}})\rt]\eqn
is well-defined in $\bRp$.
It is non-decreasing with respect to $m\in\ZZ_+$, since for any $\tau\in\cT$ and any $m\in\ZZ_+$,
$\tau^{(m)}$ can be written as $\wi\tau^{(m+1)}$, with $\wi \tau\df \tau^{(m)}\in\cT$.
Thus we can define a function $\hat u$ by
\bq
\fo x\in V,\qquad  \hat u(x)&\df& \lim_{m\ri \iy} u^{(m)}(x)\eq
By definition of the value function $u$ given by \eqref{defOSP}, we have $u^{(m)}(x) \le u(x)$ for every $m \in \ZZ_+$ and thus $\hat u(x) \le u(x)$ for every $x \in V$. To show the reverse inequality, consider any stopping time $\tau \in \cT$ and apply Fatou Lemma to get
\begin{align*}
\EE_x\lt[ \exp(-r\tau)\phi(X_{\tau})\rt]&=\EE_x\lt[ \liminf_{m\to \iy} \exp(-r\tau^{(m)})\phi(X_{\tau^{(m)}})\rt] \\
&\le \liminf_{m\to \iy} \EE_x\lt[ \exp(-r\tau^{(m)})\phi(X_{\tau^{(m)}})\rt] \\
&\le \liminf_{m\to \iy} u^{(m)}(x)\\
&\le \hat u(x).
\end{align*}
Therefore, the value function $u$ coincides with the limit of the sequence $(u^{(m)})_{m \in \ZZ_+}$.
At this stage, we recall the definition of the stopping region
\bqn{dfD}
D&\df& \{x\in V\st u(x)=\phi(x)\}\eqn

We are in a position to state our main result
\begin{theo}\label{mainresult}
We have
\bq
u_\iy&=&u\\
D_\iy&=&D.
\eq\end{theo}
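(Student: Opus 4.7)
The plan is to prove $u_\iy\leq u$ and $u\leq u_\iy$ separately, and then deduce $D_\iy=D$ from Proposition \ref{strict}. The inequality $u_\iy\leq u$ is the easy half: by Lemma \ref{uwiu}, for every $n\in\NN$ and $m\in\ZZ_+$ the function $\wi u_n^{(m)}(x)=\EE_x[\exp(-r\tau_n^{(m)})\phi(X_{\tau_n^{(m)}})]$ is an expectation at the genuine $\cG_t$-stopping time $\tau_n^{(m)}\in\cT$, hence $\wi u_n^{(m)}\leq u$ by the definition of $u$ in \eqref{defOSP}. Taking first $\liminf_{m\ri\iy}$ and then $\lim_{n\ri\iy}$ yields $u_\iy\leq u$.

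For the reverse inequality the key step is to establish the supermartingale estimate
\bq
\fo x\in V,\,\fo m\in\ZZ_+,\,\fo \tau\in\cT,\qquad \EE_x\lt[\exp(-r\tau^{(m)})u_\iy(X_{\tau^{(m)}})\rt]&\leq & u_\iy(x).
\eq
Once this is in hand, the pointwise inequality $u_\iy\geq \phi$ from Proposition \ref{excessive} yields $\EE_x[\exp(-r\tau^{(m)})\phi(X_{\tau^{(m)}})]\leq u_\iy(x)$; taking the supremum over $\tau\in\cT$ gives $u^{(m)}(x)\leq u_\iy(x)$, and letting $m\ri\iy$ together with the identification $u=\hat u$ established just before the statement produces $u\leq u_\iy$. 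To prove the estimate itself I would recycle the conditional computation already carried out in the proof of Lemma \ref{lem1}: writing $Z_k\df \exp(-r\sigma_k)u_\iy(X_{\sigma_k})$ (with the convention $0\cdot(+\iy)=0$ on the event $\sigma_k=+\iy$), the Strong Markov property together with the pointwise inequality $\cK[u_\iy]\leq(r+L)u_\iy$ which holds on \emph{all} of $V$ by Proposition \ref{excessive} gives
\bq
\EE[Z_k\,|\,\cG_{\sigma_{k-1}}]&=&\exp(-r\sigma_{k-1})\frac{\cK[u_\iy](X_{\sigma_{k-1}})}{r+L(X_{\sigma_{k-1}})}\ \leq\ Z_{k-1},
\eq
so that $(Z_k)_{k\in\ZZ_+}$ is a non-negative supermartingale for $(\cG_{\sigma_k})_{k\in\ZZ_+}$. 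Since $X$ is constant on each inter-jump interval $[\sigma_{k-1},\sigma_k)$, the map $t\mapsto\exp(-rt)u_\iy(X_t)$ is non-increasing between successive jumps, and the bound at the bounded stopping time $\tau^{(m)}\leq\sigma_m$ then follows by a standard optional-stopping argument applied at the finite horizon $m$.

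The identification of the stopping region is immediate: once $u_\iy=u$ has been established, Proposition \ref{strict} yields $D_\iy=\{x\in V\st u_\iy(x)=\phi(x)\}=\{x\in V\st u(x)=\phi(x)\}=D$.

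I expect the main obstacle to be the rigorous justification of the supermartingale estimate for $u_\iy$. Beyond the straightforward Markov computation, one must handle carefully (i) the possibly infinite values of $u_\iy$, which are accommodated by the convention $0\cdot(+\iy)=0$ as in Remark \ref{rem1}; (ii) the absorbing case where $\sigma_k=+\iy$ for a finite $k$; and (iii) the fact that an arbitrary $\tau\in\cT$ need not coincide with a jump time of $X$. The piecewise-constant structure of $X$ and the explicit exponential law of inter-jump times make each of these manageable, but each passage to the limit (via Fatou or monotone convergence) must be checked with care when transferring the discrete-time supermartingale inequality to the continuous-time stopping time $\tau^{(m)}$ and then letting $m\ri\iy$.
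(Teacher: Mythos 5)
Your overall architecture coincides with the paper's: $u_\iy\leq u$ via the admissibility of the stopping times $\tau_{n+1}^{(m)}$ together with Lemma \ref{uwiu}, the reverse inequality via the supermartingale estimate \eqref{surmartingale} combined with $u_\iy\geq\phi$ and the identification $u=\hat u$, and finally $D_\iy=D$ from Proposition \ref{strict}. The first and last steps are correct as you state them, and your discrete computation $\EE[Z_k\,|\,\cG_{\sigma_{k-1}}]=\exp(-r\sigma_{k-1})\cK[u_\iy](X_{\sigma_{k-1}})/(r+L(X_{\sigma_{k-1}}))\leq Z_{k-1}$ is also correct, since Proposition \ref{excessive} gives $\cK[u_\iy]\leq(r+L)u_\iy$ on all of $V$.

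The gap is in the passage from the discrete supermartingale $(Z_k)_k$ to the bound at $\tau^{(m)}=\sigma_m\wedge\tau$. Monotonicity between jumps gives the pointwise bound $\exp(-r\tau^{(m)})u_\iy(X_{\tau^{(m)}})\leq Z_{N\wedge m}$, where $N$ is the index of the last jump occurring before $\tau$; but $N\wedge m$ is \emph{not} a stopping time for the discrete filtration $(\cG_{\sigma_k})_k$, since $\{N\leq k\}=\{\tau<\sigma_{k+1}\}$ requires knowledge of $\sigma_{k+1}$. Hence the ``standard optional-stopping argument at horizon $m$'' does not apply as stated: for a non-adapted index, $\EE[Z_{N\wedge m}]\leq Z_0$ is false in general. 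This is precisely the content hidden in your item (iii), and it is the main technical point of the proof rather than a routine verification. The paper resolves it by induction on $m$, whose base case is an explicit computation: splitting on $\{\sigma_1\leq\tau\}$ and $\{\tau<\sigma_1\}$ and using the memorylessness of the exponential holding time (on $\{\tau<\sigma_1\}$ one has $\sigma_1=\tau+\hat\sigma_1\circ\theta_\tau$ with $\hat\sigma_1$ exponential of parameter $L(x)$, independent), one finds that $\EE_x[\exp(-r(\sigma_1\wedge\tau))u_\iy(X_{\sigma_1\wedge\tau})]$ equals the convex combination $\alpha\, u_\iy(x)+(1-\alpha)\,\cK[u_\iy](x)/(r+L(x))$ with $\alpha=\EE_x[\exp(-r\tau)\un_{\tau<\sigma_1}]\in[0,1]$, which is at most $u_\iy(x)$; the induction step then follows from $\sigma_{m+1}\wedge\tau=\sigma_m\wedge\tau+(\sigma_1\wedge\tau)\circ\theta_{\sigma_m\wedge\tau}$ and the strong Markov property. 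You need this computation (or an equivalent argument showing that $t\mapsto e^{-rt}u_\iy(X_t)$ is a genuine continuous-time supermartingale) to close the proof.
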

\begin{proof}
It is sufficient to show that $u_\iy=u$, since $D_\iy=D$ will then follow from Proposition \ref{strict} and \eqref{dfD}.
\par\sm
We begin by proving the inequality $u_\iy\leq u$.
Fix some $x\in V$.
By considering in \eqref{um} the stopping time $\tau\df\tau_{n+1}$ defined in \eqref{taun+1}, we get for any given $m\in \ZZ_+$,
\bq
u^{(m)}(x)&\geq & \EE_x\lt[\exp(-r\tau_{n+1}^{(m)})\phi(X_{\tau_{n+1}^{(m)}})\rt]\\
&=& \wi u_{n+1}^{(m)}(x)\eq
considered in \eqref{umn2}.
Taking Lemma \ref{uwiu} into account, we deduce that
\bq
u^{(m)}(x)&\geq &  u_{n+1}^{(m)}(x)\eq
and letting $m$ go to infinity, we get $u(x)\geq u_{n+1}(x)$.
It remains to let $n$ go to infinity to show that $u(x)\geq u_{\iy}(x)$.\par\sm
To prove the reverse inequality $u_\iy\geq u$, we will show by induction that for every $x \in V$, every $m \in \ZZ_+$ and every $\tau \in \cT$, we have
\bqn{surmartingale}
\EE_x \lt[  \exp(-r(\sigma_m \wedge \tau)u_\iy(X_{\sigma_m \wedge \tau}) \rt] \le u_\iy(x).
\eqn
For $m=1$, we have, because $u_\iy(X_\tau)=u_\iy(x)$ on the set $\{\tau < \sigma_1\}$,
\begin{align*}
\EE_x \lt[  \exp(-r(\sigma_1 \wedge \tau)u_\iy(X_{\sigma_1 \wedge \tau}) \rt]&=\EE_x \lt[  \exp(-r\sigma_1)u_\iy(X_{\sigma_1})\indic_{\sigma_1\le \tau} \rt]+
\EE_x \lt[  \exp(-r \tau)u_\iy(X_{ \tau})\indic_{\tau < \sigma_1} \rt]\\
&=\EE_x \lt[  \exp(-r \sigma_1)u_\iy(X_{\sigma_1})\indic_{\sigma_1\le \tau} \rt]+u_\iy(x)\EE_x \lt[  \exp(-r \tau)\indic_{\tau < \sigma_1} \rt]\\
&=\EE_x \lt[  \exp(-r \sigma_1)u_\iy(X_{\sigma_1}) \rt]+u_\iy(x) \EE\lt[  \exp(-r \tau)\indic_{\sigma_1 > \tau}\rt]\\
&-  \EE\lt[ \exp(-r \sigma_1)u_\iy(X_{\sigma_1})\indic_{\sigma_1 >\tau}\rt].
\end{align*} 
Focusing on the third term, we observe, that on the set $\{\tau < \sigma_1\}$, we have $\sigma_1=\tau+\hat\sigma_1 \circ\theta_\tau$ where $\hat\sigma_1$ is an exponential random
 variable with parameter $L(x)$ independent of $\tau$. Therefore, the Strong Markov property yields
\begin{align*}
 \EE\lt[ \exp(-r \sigma_1)u_\iy(X_{\sigma_1})\indic_{\sigma_1 >\tau}\rt]&=\EE_x\lt[ \exp(-r \tau)\EE_x \lt[  \exp(-r \hat\sigma_1)u_\iy(X_{\hat\sigma_1}) \rt]\indic_{\sigma_1 >\tau}\rt]\\
 &=\frac{\cK[u_\iy](x)}{r+L(x)}\EE_x\lt[  \exp(-r \tau)\indic_{\sigma_1 > \tau}\rt].
\end{align*}
Hence,
\begin{align*}
\EE_x \lt[  \exp(-r(\sigma_1 \wedge \tau)u_\iy(X_{\sigma_1 \wedge \tau}) \rt]&=\frac{\cK[u_\iy](x)}{r+L(x)}\lt( 1- \EE_x\lt[  \exp(-r \tau)\indic_{\sigma_1 > \tau}\rt]\rt)+u_\iy(x) \EE_x\lt[  \exp(-r \tau)\indic_{\sigma_1 > \tau}\rt]\\
&\le u_\iy(x)
\end{align*}
where the last inequality follows from Proposition \ref{excessive}. This proves the assertion for $m=1$. Assume now that for every $x \in V$ and every $\tau \in \cT$, we have
\bq 
\EE_x \lt[  \exp(-r(\sigma_m \wedge \tau)u_\iy(X_{\sigma_m \wedge \tau}) \rt] &\le &u_\iy(x).
\eq 
Observing that $\sigma_{m+1}\wedge \tau=\sigma_{m}\wedge \tau+(\sigma_{1}\wedge \tau)\circ \theta_{\sigma_{m}\wedge \tau}$, we get
\begin{align*}
\EE_x \lt[  \exp(-r(\sigma_{m+1} \wedge \tau)u_\iy(X_{\sigma_{m+1} \wedge \tau}) \rt]&=\EE_x \lt[  \exp(-r(\sigma_m \wedge \tau)\EE_{X_{\sigma_m \wedge \tau}}\lt( \exp(-r(\sigma_1 \wedge \tau)u_\iy(X_{\sigma_1 \wedge \tau})  \rt) \rt]\\
&\le \EE_x \lt[  \exp(-r(\sigma_m \wedge \tau) u_\iy(X_{\sigma_m \wedge \tau} )\rt]\\
&\le u_\iy(x),
\end{align*}
which ends the argument by induction.
To conclude, we take the limit at the right-hand side of inequality \eqref{surmartingale} to obtain $u(x) \le u_\iy(x)$ for every $x \in V$. 
\end{proof}

\begin{rem} \label{nodiscount}
Because we have financial applications in mind, we choose to work directly with payoffs of the form $e^{-r t} \phi(X_t)$. Observe, however, that our methodology applies when $r=0$ pending the assumption
$\phi(X_\tau)=0$ on the set $\{ \tau=+\infty\}$.
\end{rem}

We close this section by giving a very simple example on the countable state space $\ZZ$ with a bounded reward function $\phi$ such that the recursive algorithm does not stop in finite time because it eliminates only one point at each step. 

\begin{exa}
Let $(X_t)_{t\geq 0}$ be a birth-death process with the generator on $\ZZ$
\bq 
\left\{
\begin{array}{cc}
L(x,x+1)&=\lambda \ge 0,\\
L(x,x-1)&=\mu \ge 0, \\
L(x)&=\lambda + \mu,
\end{array}
\right.
\eq 
We define the reward function as 
\bq 
\phi(x)=\left\{
\begin{array}{c}
0 \hbox{ for } x \le 0\\
1 \hbox{ for } x = 1\\
2 \hbox{ for } x \ge 2.
\end{array}
\right.
\eq 
We assume $r=0$ and $\lambda \ge \mu$. Therefore, $\cD_1=\ZZ\setminus \{1\}$. It is easy to show that
\bq 
u_1(1)&=&\frac{2\lambda}{\lambda+\mu}, \hbox{ and thus } \cL[u_1](0)\ =\ \lambda u_1(1)>0.
\eq 
Therefore, $\cD_2=\ZZ\setminus \{1,0\}$. At each step $n \in \ZZ_+$, because $u_n(1-n)>0$, the algorithm will remove only the integer $1-n$ in the set $\cD_n$. Therefore, it will not reach the stopping region $\cD=\{2,3,\ldots\}$ in finite time.
\end{exa} 

\subsection{Measurable state space}\label{mss}

Up to know, we have only considered continuous-time Markov chains with a discrete state space. But, it is not difficult to see that the results of the previous section can be extended to the case where the state space of the Markov chain is a measurable space. More formally,
 we consider on a measurable state space $(V,\cV)$, a non-negative finite kernel $K$. It is a mapping 
\bq
K\st V\times\cV \ni (x,S)&\mapsto & K(x,S)\in\RR_+\eq
such that
\begin{itemize}
\item for any $x\in V$, $K(x,\cdot)$ is a non-negative finite measure on $(V,\cV)$ (because $K(x,V)\in \RR_+$),
\item for any $S\in \cV$, $K(\cdot,S)$ is a non-negative measurable function on $(V,\cV)$.
\end{itemize}
For any probability measure $m$ on $V$, let us  associate to $K$ a  continuous-time Markov process $X\df(X_t)_{t\geq 0}$ whose initial distribution is $m$.
First we set $\sigma_0\df0$ and $X_0$ is sampled according to $m$. Then we consider an exponential random variable $\sigma_1$ of parameter $K(X_0)\df K(X_0,V)$.
If $K(X_0)=0$, we have a.s.\ $\sigma_1=+\iy$ and we take $X_t\df X_0$ for all $t> 0$, as well as $\sigma_n\df+\iy$ for all $n\in\NN, n\geq 2$.
If $K(X_0)>0$, we take $X_t\df X_0$ for all $t\in(0,\sigma_1)$ and we sample $X_{\sigma_1}$ on $V\setminus\{X_0\}$ according to the 
probability distribution $K(X_0,\cdot)/K(X_0)$.
Next, still in the case where $\sigma_1<+\iy$, we sample an inter-time $\sigma_2-\sigma_1$ as an exponential distribution of parameter $K(X_{\sigma_1})$.
If $K(X_{\sigma_1})=0$, we have a.s.\ $\sigma_2=+\iy$ and we take $X_t\df X_{\sigma_1}$ for all $t\in(\sigma_1,+\iy)$, as well as $\sigma_n\df+\iy$ for all $n\in\NN, n\geq 3$.
If $K(X_{\sigma_1})>0$, we take $X_t\df X_0$ for all $t\in(\sigma_1,\sigma_2)$ and we sample $X_{\sigma_2}$ on $V\setminus\{X_{\sigma_1}\}$ according to the 
probability distribution $(K(X_{\sigma_1},x)/K(X_{\sigma_1}))_{x\in V\setminus\{X_{\sigma_1}\}}$.
We keep on following the same procedure, where
 all the ingredients are independent, except for the explicitly mentioned dependences.
\par
In particular, we get a non-decreasing family $(\sigma_n)_{n\in\ZZ_+}$ of jump times taking values in $\bRp\df\RR_+\sqcup\{+\iy\}$.
Denote the corresponding exploding time
\bq\sigma_{\iy}&\df&\lim_{n\ri\iy} \sigma_n\ \in\ \bRp\eq
When $\sigma_\iy<+\iy$, we must still define $X_t$ for $t\geq \sigma_\iy$.
So introduce $\tr$ a cemetery point not belonging to $V$ and denote $\bar V\df V\sqcup\{\tr\}$.
We take $X_t\df \tr$ for all $t\geq \sigma_\iy$ to get  a $\bar V$-valued process $X$.\\ 
Let $\cB$ be the space of bounded and measurable functions from $V$ to $\RR$. For $f \in \cB$, the infinitesimal generator of $X=(X_t)_{t\geq 0}$ is given by
\bq 
\cL[f](x)&=&\int_V f(y)K(x,dy)-K(x)f(x) \ \df\ \cK[f](x)-K(x)f(x).
\eq 
As in Section 4.1, we set some payoff function $\phi\in\bFp\setminus\{0\}$ and $r>0$.
We will construct a subset $D_\iy\subset V$ and a function $u_\iy\in\bFp$ by our recursive algorithm as follows:
\par
We begin by taking $D_0\df V$ and $u_0\df \phi$. Next, let us assume that $D_n\subset V$ and $u_n\in\bFp$ have been built for some $n\in\ZZ_+$
such that
\bq
\forall x\in V\setminus D_n, \quad (r+L(x))u_n(x)&=&\cK[ u_n](x) \\
\forall x \in D_n, \quad  u_n(x)&=&\phi(x).
\eq
Observe that it is trivially true for $n=0$.
Then, we define the subset $D_{n+1}$ as follows
\bq
D_{n+1}&\df& \{x\in D_n\st \cK[u_n](x)\leq (r +K(x))u_n(x)\}\eq
where the inequality is understood in $\bRp$.
\par
Next, we consider the stopping time
\bq 
\tau_{n+1}&\df&\inf\{t\geq 0\st X_t\in D_{n+1}\}\eq
with the usual convention that $\inf \emptyset =+\iy$. It is easy to check that the proofs of Section 4.1. are directly deduced.

\begin{rem} \label{Poissonization}
Our methodology also applies for discrete Markov chains according to the Poissonization  technique that we recall briefly. Consider a Poisson process $N=(N_t)_t$ of intensity $\lambda$ and a discrete Markov chain $(X_n)_{n\in \Z_+}$ with transition matrix or kernel $P$. Assume that  $(X_n)_{n\in \Z_+}$ and $N=(N_t)_t$ are independent. Then, the process 
$$
X_t=\sum_{n=0}^{N_t} X_n
$$
is a continous-time Markov chain with generator $\cL \df \lambda (P-\Id)$.
\end{rem}

\section{ Application: optimal stopping with random intervention times}
We revisit the paper by Dupuis and Wang \cite{DupuisWang:02} where they consider a class of optimal stopping problems that can be only stopped at Poisson jump times. Consider a probability space $(\Omega,\cF\df(\cF_t)_{t\geq 0},\PP)$ satisfying the usual conditions. For $x>0$, let $(S_t^x)_{t\geq 0}$ be a geometric Brownian motion solving the stochastic differential equation
\bq \label{gbm}
\frac{dS_t^x}{S_t^x}&=&b\,dt+\sigma\,dW_t,\qquad S^x_0\ =\ x,
\eq 
where $W=(W_t)_{t\geq 0}$ is a standard $\cF$-Brownian motion and $b$ and $\sigma>0$ are constants. 
When $x=0$, we take $S^0_t=0$ for all times $t\geq 0$.
The probability space is rich enough to carry a $\cF$-Poisson process  $N=(N_t)_{t\geq 0}$ with intensity $\lambda>0$ that is assumed to be independent from $W$. The jump times of the Poisson process are denoted by $T_n$ with $T_0=0$\\
In \cite{DupuisWang:02}, the following optimal stopping problem is considered
\bq 
u(0,x)&=&\sup_{\tau \in \cS_0} \E\left[ e^{-r\tau}(S^x_\tau-K)_+\right],
\eq  
where $r >b$ and 
$\cS_0$ is the set of $\cF$-adapted stopping time $\tau$ for which $\tau(\omega)=T_n(\omega)$ for some $n\in \ZZ_+$.  Similarly to \cite{DupuisWang:02}, let us define $\cG_n=\cF_{T_n}$ and the  $\cG_n$-Markov chain $Z_n=(T_n,S^x_{T_n})$ to have
\bq 
u(0,x)&=&\sup_{N \in \cN_0} \E\left[ \psi(Z_N)\vert Z_0=(0,x)\right], \hbox{ where } \psi(t,x)\ =\ e^{-rt}(x-K)_+
\eq 
and $\cN_0$ is the set of $\cG$-stopping time with values in $\ZZ_+ $. 
To enter the continuous-time framework of the previous sections, we use Remark \ref{Poissonization} with an independent Poisson process  $\wi N=(\wi N_t)_t$ with intensity $1$.
To start our recursive approach, we need to compute the infinitesimal generator $\wi \cL$ of the continuous Markov chain $(\wi Z_t=\sum_{i=0}^{\wi N_t} Z_n)_{t\geq 0}$ with state space $V=\R_+\times\R_+$ in order to define 
$$
\wi \cD_1\df\{(t,x)\in V;\,\wi\cL[\psi](t,x) \le 0\}.
$$
Let $f$ be a bounded and measurable function on $V$. According to Remark \ref{Poissonization}, we have,
\bq 
\wi\cL[f](t,x)&=&\lambda \int_0^{+\infty} \E\left[ f(t+u,S_u^x )\right]e^{-\lambda u}\,du-f(t,x).
\eq 
Because $\psi(t,x)=e^{-rt}\phi(x)$ with $\phi(x)=(x-K)_+$, we have
\bq 
\wi\cL[\psi](t,x)&=&e^{-rt}\left( \lambda R_{r+\lambda}[\phi](x)-\phi(x)\right),
\eq 
where
\bq 
R_{r+\lambda}[\phi](x)&=&\int_0^{+\infty} \E[\phi(S_u^x)]e^{-(r+\lambda)u}\,du
\eq 
is the resolvent of the continuous Markov process $S^x=(S_t^x)_{t\geq 0}$.  Therefore, we have $\wi\cD_1=\RR_+\times\cD_1$ with
\bq
\cD_1&\df&\{x \in \R_+,\, \lambda R_{r+\lambda}[\phi](x)-\phi(x)\le 0\}.
\eq
First, we observe that $\cD_1$ is an interval $[x_1,+\infty[$. Indeed, let us define
\bq 
\eta(x)&\df&\lambda R_{r+\lambda}[\phi](x)-\phi(x).
\eq 
Clearly, $\eta(x)>0$ for $x\le K$. Moreover, for $x>K$,
\bq 
\eta^\prime(x)&=&\left(\lambda\int_0^{+\infty} \partial_x\E[\phi(S_u^x)]e^{-(r+\lambda)u}\,du\right)-\phi'(x).
\eq 
It is well-known that $\partial_x \E[\phi(S_u^x)] \le e^{bu}$ for any $x\geq 0$ and thus, because $r>b$,
\bq 
\eta^\prime(x)\le \frac{\lambda}{r-b+\lambda}-1&<&0,
\eq 
which gives that $\eta$ is a decreasing function on $[K,+\iy)$. It follows that if $\cD_1$ is not empty, then it will be an interval of the form $[x_1,+\iy)$. Now,
\bq 
\eta(x)&=&x\left(\lambda\int_0^{+\infty}\frac{ \E[\phi(S_u^x)]}{x}e^{-(r+\lambda)u}\,du\right)-\phi(x).
\eq 
Because $\displaystyle{\frac{ \E[\phi(S_u^x)]}{x}} \le e^{bu}$, we have
$
\eta(x) \le \left(\frac{\lambda}{r-b+\lambda}-1\right)x+K
$
and therefore, $\eta(x)\le 0$ for $x \ge \left(1+\frac{\lambda}{r-b}\right)K$ which proves that $\cD_1$ is not empty.\par
We will now prove by induction that for every $n \in \NN$, $\wi\cD_n=\RR_+\times\cD_n$ with $\cD_n=[x_n,+\iy)$ and $x_n > K$.
Assume that it is true for some $n\in\NN$. Following our monotone procedure with Remark \ref{nodiscount}, we define
the solution $u_{n+1}\st\RR_+\times\RR_+\ri\RR$ of the equation
\bq
\lt\{\begin{array}{rcll}
\wi\cL [u_{n+1}]&=&0&,\hbox{ on }\wi \cD_{n}^{\mathrm{c}}\\
u_{n+1}&=&\psi &,\hbox{ on }\wi \cD_n
\end{array}\rt.\eq
and define
$$
\wi \cD_{n+1} \df \{ (t,x) \in \wi D_n,\, \wi\cL[ u_{n+1}] \le 0 \}.
$$
Let us check that $\wi\cD_{n+1}=\RR_+\times\cD_{n+1}$ 
 with $\cD_{n+1}=[x_{n+1},+\iy)$ and $x_{n+1} \ge x_n$.\par\sm
To do this, we look for a function of the form
\bqn{unvn}
\fo (t,x)\in\RR_+\times\RR_+,\qquad u_{n+1}(t,x)&=&\exp(-rt)v_{n+1}(x)\eqn
We end up with the following equation on $v_{n+1}$:
\bqn{Rrl}
\lt\{\begin{array}{rcll}
\lambda R_{r+\lambda}[v_{n+1}]-v_{n+1}&=&0&\hbox{, on $ \cD_n^{\mathrm{c}}$}\\
v_{n+1}&=&\phi &\hbox{, on $\cD_n$}\end{array}\rt.\eqn
or equivalently, (see \cite{EthierKurtz:05}, Proposition 2.1 page 10)
\bq
\lt\{\begin{array}{rcll}
\cL^S[v_{n+1}]-r v_{n+1}&=&0&\hbox{, on $ \cD_n^{\mathrm{c}}$}\\
v_{n+1}&=&\phi &\hbox{, on $\cD_n$}\end{array}\rt.\eq
 where $\cL^S$ is the infinitesimal generator of $S^x=(S_t^x)_{t\geq 0}$, that is, acting on any $f\in\cC^2(\RR_+)$ via
\bq
\cL^S[f](x)&=&\frac{\sigma^2x^2}{2}f^{\prime\prime}(x)+bxf^{\prime}(x),
\eq
With this formulation we see that $v_{n+1}$ is given by
\bq
\fo x\in\RR_+,\qquad v_{n+1}(x)&=&\EE_x[\exp(-r\tau_{x_n}^x)\phi(S^x_{\tau_{x_n}})]\eq
where $\tau_{x_n}$ is the first hitting time of $\cD_n=[x_n,+\infty[$ by our induction hypothesis.\par 
By definition, we have
\bq
\wi\cD_{n+1}&\df&\{(t,x)\in\wi\cD_n\st \wi\cL [u_{n+1}](t,x)\leq 0\}\\
&=&\RR_+\times\{x\in\cD_n\st \lambda R_{r+\lambda}[v_{n+1}](x)-v_{n+1}(x)\leq 0\}\\
&=&\RR_+\times\{x\in\cD_n\st \lambda R_{r+\lambda}[v_{n+1}](x)-\phi(x)\leq 0\}
\eq
thus $\wi\cD_{n+1}=\RR_+\times\cD_{n+1}$ where
\bq
\cD_{n+1}&\df&\{x\in\cD_n\st \zeta_{n+1}(x)\leq 0\}\eq
with
\bq
\fo x\geq 0,\qquad \zeta_{n+1}(x)&\df& \lambda R_{r+\lambda}[v_{n+1}](x)-\phi(x)\eq
\par
To prove that $\cD_{n+1}$ is of the form $[x_{n+1},+\iy)$, we begin by showing that
\bqn{zetazeta}
\fo y\geq x\geq x_n, \qquad \zeta_{n+1}(x)=0&\Rightarrow& \zeta_{n+1}(y)\leq 0\eqn
\par
To do so, introduce the hitting time
\bq
\tau_x^y&\df&\inf\{t\geq 0\st S_t^y=x\}\eq
 Recall that the solution of \eqref{gbm} is given
  by
  \bq
  \fo x\in\RR+,\,\fo t\geq 0,\qquad
  S_t^x&=&x\exp\lt(\sigma W_t-\frac{\sigma^2}{2}t +bt\rt)\eq
 It follows that 
 \bq
 \tau_x^y&=&\inf\{t\geq 0\st W_t-\frac{\sigma^2}{2}t +bt=\ln(x/y)\}\eq
In particular $\tau^y_x$  takes the value $+\iy$ with positive probability when  $b>\sigma^2/2$,
but otherwise $\tau^y_x$ is a.s.\ finite.
Nevertheless, taking into account that for any $z\geq x_n$, we have $v_{n+1}(z)=\phi(z)$, 
we can always write for $ y\geq x\geq x_n$:
\bq
\lefteqn{R_{r+\lambda}[v_{n+1}](y)}\\&=&\EE\lt[\int_0^\iy v_{n+1}(S_u^y)\exp(-(r+\lambda)u)\, du\rt]\\
&=&\EE\lt[\int_0^{\tau_{x}^y}v_{n+1}(S_u^y)\exp(-(r+\lambda)u)\, du+\int_{\tau_{x}^y}^\iy v_{n+1}(S_u^y)\exp(-(r+\lambda)u)\, du\rt]\\
&=&\EE\lt[\int_0^{\tau_{x}^y}\phi(S_u^y)\exp(-(r+\lambda)u)\, du\rt]+
\EE\lt[\exp(-(r+\lambda)\tau_{x}^y)\int_{0}^\iy v_{n+1}(S_{\tau_{x}^y+u}^y)\exp(-(r+\lambda)u)\, du\rt]\\
&=&\EE\lt[\int_0^{\tau_{x}^y}\phi(S_u^y)\exp(-(r+\lambda)u)\, du\rt]+
\EE\lt[\exp(-(r+\lambda)\tau_{x}^y)\rt]R_{r+\lambda}[v_{n+1}](x)\eq
where we use the strong Markov property with the stopping time $\tau_{x}^y$.
Reversing the same argument, with $v_{n+1}$ replaced by $\phi$, we deduce that
\bq
R_{r+\lambda}[v_{n+1}](y)&=&\EE\lt[\int_0^{\tau_{x}^y}\phi(S_u^y)\exp(-(r+\lambda)u)\, du\rt]+
\EE\lt[\exp(-(r+\lambda)\tau_{x}^y)\rt]R_{r+\lambda}[\phi](x) \\&&+\EE\lt[\exp(-(r+\lambda)\tau_{x}^y)\rt](R_{r+\lambda}[v_{n+1}](x)-R_{r+\lambda}[\phi](x))\\
&=&R_{r+\lambda}[\phi](y)+\EE\lt[\exp(-(r+\lambda)\tau_{x}^y)\rt](R_{r+\lambda}[v_{n+1}](x)-R_{r+\lambda}[\phi](x))\eq
Thus, we have
\bq
\zeta_{n+1}(y)&=&\lambda R_{r+\lambda}[\phi](y)-\phi(y)+\lambda\EE\lt[\exp(-(r+\lambda)\tau_{x}^y)\rt](R_{r+\lambda}[v_{n+1}](x)-R_{r+\lambda}[\phi](x))\eq
 In the first part of the above proof, to get the existence of 
$x_1$, we have
shown that the mapping $\zeta_1\df R_{r+\lambda}[\phi]-\phi$ is non-increasing on $[x_1,+\iy)\supset [x_n,+\iy)$, and in particular
\bq
\lambda R_{r+\lambda}[\phi](y)-\phi(y)&\leq & \lambda R_{r+\lambda}[\phi](x)-\phi(x)
\\
&=&\zeta_1(x)
\eq
so we get
\bq
\zeta_{n+1}(y)&\leq & \zeta_1(x)\EE\lt[\exp(-(r+\lambda)\tau_{x}^y)\rt](\lambda R_{r+\lambda}[v_{n+1}](x)-\lambda R_{r+\lambda}[\phi](x))\eq
Assume now that $\zeta_{n+1}(x)=0$. It means that
\bq
\lambda R_{r+\lambda}[v_{n+1}](x)&=&\phi(x)\eq
implying that
\bq
\zeta_{n+1}(y)&\leq &\zeta_1(x)+ \EE\lt[\exp(-(r+\lambda)\tau_{x}^y)\rt](\phi(x)-\lambda R_{r+\lambda}[\phi](x))\\
&\leq & \lt(1-\EE\lt[\exp(-(r+\lambda)\tau_{x}^y)\rt]\rt) \zeta_1(x)
\\
&\leq & 0\eq
since $x\geq x_n\geq x_1$ so that $\zeta_1(x)\leq 0$.\par
This proves \eqref{zetazeta} and ends the induction argument.\par
According to Proposition \ref{excessive} and Theorem \ref{mainresult} (more precisely its extension given in Subsection \ref{mss}),  the value function $u$ and the stopping set $\cD=\{x \in \R_+,\,u(x)=\phi(x)\}$  satisfy
$u(t,x)=e^{-rt}v(x)$, where
\bq
v(x)&=&\lambda R_{r+\lambda}[v](x) \hbox{ on } \R_+ \setminus \cD,
\eq
\bq
v&=&\phi \hbox{ on } \cD
\eq
and
\bq
\cD&=&\displaystyle{\bigcap_{n\in \NN}} [x_n,+\infty[.
\eq
The stopping set is an interval $[x^*,+\infty[$ that may be empty if $x^*$ is not finite.
Using again \cite{EthierKurtz:05}, Proposition 2.1, we obtain
\bq
\cL^S[v](x)-rv(x)&=0 \qquad \forall x \in \R_+ \setminus \cD.
\eq
Therefore, the function $w$ given by $w(x)\df \lambda R_{r+\lambda}[v](x)$ for any $x\in(0,+\iy)$ also satisfies
\bq
\cL^S[w](x)-rw(x)&=&0 \qquad \forall x \in \R_+ \setminus \cD.
\eq
Moreover,
\bq
(-\cL^S+(r+\lambda))[w](x)&=&\lambda v(x)\ =\ \lambda \phi(x)\qquad \forall x \in \cD,
\eq
which yields
\bq
\cL^S[w](x)-rw(x)+\lambda (\phi(x)-w(x))&=&0\qquad\forall x \in  \cD.
\eq
This corresponds to the variational inequality (3.4)-(3-9) page 6 solved in \cite{DupuisWang:02}, establishing that $\cD$ is non-empty.


\end{document}